% AMS-LaTeX 1.2
\documentclass[12pt]{amsart}
\oddsidemargin 3ex
\evensidemargin 3ex
\textheight 7.9in
\textwidth 6.0in

\font\bbbld=msbm10 scaled\magstephalf
%\font\bbbld=msbm10 scaled\magstep1

%\newcommand{\bfC}{\hbox{\bbbld C}}
\newcommand{\bfH}{\hbox{\bbbld H}}
\newcommand{\bfR}{\hbox{\bbbld R}}
\newcommand{\bfS}{\hbox{\bbbld S}}
\newcommand{\ve}{{\bf e}}
\newcommand{\vn}{{\bf n}}
\newcommand{\vs}{{\bf s}}
\newcommand{\vv}{{\bf v}}
\newcommand{\vx}{{\bf x}}
\newcommand{\vz}{{\bf z}}

\newcommand{\ol}{\overline}

\newtheorem{theorem}{Theorem}[section]
\newtheorem{lemma}[theorem]{Lemma}
\newtheorem{proposition}[theorem]{Proposition}

\theoremstyle{definition}

\theoremstyle{remark}
\newtheorem{remark}[theorem]{Remark}

\numberwithin{equation}{section}

%    Absolute value notation

%    Blank box placeholder for figures (to avoid requiring any
%    particular graphics capabilities for printing this document).

\begin{document}
\setlength{\baselineskip}{1.2\baselineskip}

\title[Hypersurfaces of Constant Curvature]
{Hypersurfaces of Constant Curvature in Hyperbolic Space I.}
%with Asymptotic Boundary at Infinity}
\author{Bo Guan}
\address{Department of Mathematics, Ohio State University,
         Columbus, OH 43210}
\email{guan@math.osu.edu}
\author{Joel Spruck}
\address{Department of Mathematics, Johns Hopkins University,
 Baltimore, MD 21218}
\email{js@math.jhu.edu}
\author{Marek Szapiel}
\thanks{Research of the first and second authors were supported in part
by NSF grants.
2000 {\em Mathematical Subject Classification}. Primary 53C21,
Secondary 35J65, 58J32.}

\begin{abstract}We investigate the problem of finding complete strictly convex hypersurfaces of constant curvature in hyperbolic space with a prescribed asymptotic boundary at infinity for a general class of curvature functions.
\end{abstract}

\maketitle

\section{Introduction}
\label{gsz-I}
\setcounter{equation}{0}

In this paper we study Weingarten hypersurfaces
of constant  curvature in hyperbolic space $\bfH^{n+1}$ with a
prescribed asymptotic boundary at infinity. More precisely, given
a disjoint collection $\Gamma = \{\Gamma_1, \dots, \Gamma_m\}$ of
closed embedded $(n-1)$ dimensional submanifolds of
$\partial_\infty \bfH^{n+1}$, the ideal boundary of $\bfH^{n+1}$
at infinity, and a smooth symmetric function $f$ of $n$ variables,
we seek a complete hypersurface
$\Sigma$ in $\bfH^{n+1}$ satisfying
\begin{equation}
\label{gsz-I10}
f(\kappa[\Sigma]) = \sigma
\end{equation}
where $\kappa[\Sigma] = (\kappa_1, \dots, \kappa_n)$
denotes the hyperbolic principal curvatures of $\Sigma$ and $\sigma$ is a
constant, with the asymptotic boundary
\begin{equation}
\label{gsz-I20}
\partial \Sigma = \Gamma.
\end{equation}

We will use the half-space model,
\[ \bfH^{n+1} = \{(x, x_{n+1}) \in \bfR^{n+1}: x_{n+1} > 0\} \]
equipped with the hyperbolic metric
\begin{equation}
\label{hb-I5}
 ds^2 = \frac{\sum_{i=1}^{n+1}dx_i^2}{x_{n+1}^2}.
\end{equation}
Thus $\partial_\infty \bfH^{n+1}$ is naturally identified with
$\bfR^n = \bfR^n \times \{0\} \subset \bfR^{n+1}$ and (\ref{gsz-I20}) may
be understood in the Euclidean sense. For convenience we say $\Sigma$ has
compact asymptotic boundary if
$\partial \Sigma \subset \partial_\infty \bfH^{n+1}$ is compact with respect
the Euclidean metric in $\bfR^n$.

The function $f$ is assumed to satisfy the fundamental structure
conditions:
\begin{equation}
\label{gs5-I20}
f_i (\lambda) \equiv \frac{\partial f (\lambda)}{\partial \lambda_i} > 0
  \;\; \mbox{in $K$}, \;\; 1 \leq i \leq n,
\end{equation}
\begin{equation}
\label{gs5-I30}
\mbox{$f$ is a concave function in $K$},
\end{equation}
and
\begin{equation}
\label{3I-36}
 f > 0 \;\;\mbox{in $K$},
  \;\; f = 0 \;\;\mbox{on $\partial K$}
\end{equation}
where $K \subset \bfR^n$ is an open symmetric convex cone such that
\begin{equation}
\label{gsz-I35}
 K^+_n := \big\{\lambda \in \bfR^n:
   \mbox{each component $\lambda_i > 0$}\big\} \subset K.
 \end{equation}
 In addition, we shall assume that $f$ is normalized
\begin{equation}
\label{gsz-I45}
f(1, \dots, 1) = 1
\end{equation}
and satisfies the following more technical assumptions
\begin{equation}
\label{gsz-I40}
%f(r, \dots, r) = r, \;\; \forall \; r > 0
\mbox{ $f$ is homogeneous of degree one}
\end{equation}
and
\begin{equation}
\label{gs5-I45}
\lim_{R \rightarrow + \infty}
   f (\lambda_1, \cdots, \lambda_{n-1}, \lambda_n + R)
    \geq 1 + \varepsilon_0 \;\;\;
\mbox{uniformly in $B_{\delta_0} ({\bf 1})$}
%f (\lambda_1, \cdots, \lambda_{n-1}, \lambda_n + R) \geq C,
%       \;\; \forall \; \lambda \in E
\end{equation}
for some fixed $\varepsilon_0 > 0$ and $\delta_0 > 0$,
where $B_{\delta_0} ({\bf 1})$ is the ball
of radius $\delta_0$ centered at ${\bf 1} = (1, \dots, 1) \in \bfR^n$.

All these assumptions are satisfied by $f = %S_{k, l} \equiv
(\sigma_k/\sigma_l)^{\frac{1}{k-l}}$, $0 \leq l < k \leq n$, defined
in $K_k$ where $\sigma_k$ is the normalized $k$-th elementary
symmetric polynomial ($\sigma_0 =1$) and
\[ K_k = \{\lambda \in \bfR^n: \sigma_j (\lambda) > 0,
\; \forall \; 1 \leq j \leq k\}. \]
See \cite{CNS3} for proof of (\ref{gs5-I20}) and (\ref{gs5-I30}).
For  (\ref{gs5-I45}) one easily computes that
\[\lim_{R \rightarrow + \infty}
   f (\lambda_1, \cdots, \lambda_{n-1}, \lambda_n + R)
  = \Big(\frac{k}l\Big)^{\frac1{k-l}}~.\]
  Since $f$ is symmetric, by (\ref{gs5-I30}),
(\ref{gsz-I45}) and (\ref{gsz-I40}) we have
\begin{equation}
\label{gsz-I200}
f (\lambda) \leq f ({\bf 1}) + \sum f_i ({\bf 1}) (\lambda_i - 1)
= \sum f_i ({\bf 1}) \lambda_i  = \frac{1}{n} \sum \lambda_i
\;\;\mbox{in $K \subset K_1$}
\end{equation}
and
\begin{equation}
\label{gsz-I210}
 \sum f_i (\lambda) = f (\lambda) + \sum f_i (\lambda) (1 - \lambda_i)
\geq f ({\bf 1}) = 1 \;\;\mbox{in $K$}.
\end{equation}
Moreover, (\ref{gs5-I20}) and $f (0) = 0$ imply that
\begin{equation}
\label{gs5-I40'}
f > 0 \;\; \mbox{in $K^+_n$}.
%f = 0 \;\; \mbox{on $\partial \Gamma^+_n$},
\end{equation}

In this paper we shall focus on the case of finding complete hypersurfaces
satisfying (\ref{gsz-I10})-(\ref{gsz-I20}) with positive hyperbolic
principal curvatures everywhere; for convenience we shall call such
 hypersurfaces {\em (hyperbolically) locally strictly convex}.
In Part II \cite{GS08} we will allow
 f satisfying (\ref{gs5-I20})-(\ref{gs5-I45})
and general cones K.

Before we state our first result we need to explain the orientation
of hypersurfaces under consideration. In this paper all
hypersurfaces in $\bfH^{n+1}$ we consider are assumed to be
connected and orientable. If $\Sigma$ is a complete hypersurface in
$\bfH^{n+1}$ with compact asymptotic boundary at infinity, then the
normal vector field of $\Sigma$ is chosen to be the one pointing to
the unique unbounded region in $\bfR^{n+1}_+ \setminus \Sigma$, and
the (both hyperbolic and Euclidean) principal curvatures of $\Sigma$
are calculated with respect to this normal vector field.

\begin{theorem}
\label{gsz-th10}
Let $\Sigma$ be a complete locally strictly convex $C^2$ hypersurface in
$\bfH^{n+1}$ with compact asymptotic boundary at infinity.
Then $\Sigma$ is the (vertical) graph of a function
$u \in C^2 (\Omega) \cap C^0 (\ol{\Omega})$, $u > 0$ in $\Omega$
and  $u = 0$ on $\ol{\Omega}$, for some domain $\Omega \subset \bfR^n$:
\[ \Sigma = \big\{(x, u (x)) \in \bfR^{n+1}_+: x \in \Omega\big\} \]
such that
\begin{equation}
\label{gsz-I50}
 \{\delta_{ij} + u_i u_j + u u_{ij}\} > 0 \;\; \mbox{in $\Omega$.}
\end{equation}
That is, the function $u^2 + |x|^2$ is strictly convex.
Moreover,
\begin{equation}
\label{gsz-I60}
e^u \sqrt{1 + |Du|^2} \leq
 \max \Big\{\max_{\ol{\Omega}} e^u,
\max_{\partial \Omega} \sqrt{1 + |Du|^2} \Big\}  \;\; \mbox{in $\Omega$}.
\end{equation}
%\begin{equation}
%\label{gsz-I60'}
%|Du| \leq
%   \max_{\partial \Omega} |Du| \;\;\; \mbox{in $\Omega$}.
%\end{equation}
\end{theorem}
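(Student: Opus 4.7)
My plan is to establish the three pieces of the conclusion—(a) that $\Sigma$ is a vertical graph over a bounded domain $\Omega\subset\bfR^n$; (b) the Hessian condition (\ref{gsz-I50}); and (c) the gradient bound (\ref{gsz-I60})—in that order, with (a) doing the bulk of the geometric work and (b)--(c) following by direct calculation from the conformal relation $g_H=x_{n+1}^{-2}g_E$ between the hyperbolic and Euclidean metrics.

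For (a), I would first use horospheres tangent to $\partial_\infty\bfH^{n+1}$ at points far from $\partial_\infty\Sigma$ as barriers: combining the maximum principle (which has content here because $\kappa^H>0$) with completeness and compactness of the asymptotic boundary, $\Sigma$ gets confined to a bounded Euclidean cylinder $\{|x|\le L\}\times(0,M]$. I would then run Alexandrov reflection across the totally-geodesic vertical hyperplanes $P_a^\omega=\{x\cdot\omega=a\}$, one for each horizontal direction $\omega\in\bfR^n$: starting with $|a|$ large so $P_a^\omega$ does not meet $\Sigma$, slide $P_a^\omega$ until the reflection of one cap of $\Sigma$ first touches the rest. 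Strict hyperbolic convexity combined with the interior and boundary maximum principle then forces the reflected piece to lie on one side of $\Sigma$, yielding vertical-line injectivity of $\Sigma$. Connectedness of $\Sigma$ and compactness of $\partial_\infty\Sigma$ upgrade this to $\Sigma=\{(x,u(x)):x\in\Omega\}$ for some $u\in C^2(\Omega)$ with $\partial\Omega=\partial_\infty\Sigma$, and continuity of $u$ up to the boundary with $u=0$ on $\partial\Omega$ comes from the asymptotic-boundary hypothesis.

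For (b), with $\Sigma$ written as the graph of $u$, upward unit Euclidean normal $\nu=(-Du,1)/w$, $w=\sqrt{1+|Du|^2}$, induced Euclidean metric $g^E_{ij}=\delta_{ij}+u_iu_j$, and Euclidean second fundamental form a multiple of $u_{ij}$, the conformal-change formula for the shape operator gives that the hyperbolic shape operator of $\Sigma$ equals, up to the positive scalar $1/(u^2w)$, the matrix $(g^E)^{-1}\{\delta_{ij}+u_iu_j+uu_{ij}\}$. Thus positivity of all $\kappa^H_i$ is precisely the matrix positivity in (\ref{gsz-I50}), and the final sentence of the theorem is the identity $D^2(u^2+|x|^2)=2\{\delta_{ij}+u_iu_j+uu_{ij}\}$. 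For (c), I would apply the maximum principle to $\varphi=u+\tfrac12\log(1+|Du|^2)$ on $\overline\Omega$: at an interior maximum $x_0$, $\varphi_i(x_0)=0$ produces $u^ku_{ki}=-u_iw^2$; substituting this into $\varphi_{ij}(x_0)\le0$ and contracting against $\{\delta_{ij}+u_iu_j+uu_{ij}\}$ (using its positive-definiteness from (b)) shows $\varphi(x_0)\le u(x_0)$, i.e.\ $\Phi(x_0)\le e^{u(x_0)}$ with $\Phi=e^uw$. Comparing with the boundary value $\Phi|_{\partial\Omega}=\sqrt{1+|Du|^2}$ (since $u=0$ on $\partial\Omega$) yields (\ref{gsz-I60}).

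The main obstacle in the plan is part (a). Local strict convexity alone is a priori compatible with non-graph configurations—for example equidistant caps over hyperbolic hyperplanes have $0<\kappa^H<1$ uniformly, yet ``wrap around'' and violate the vertical-line test—so the graph conclusion truly needs both the Euclidean compactness of $\partial_\infty\Sigma$ and the ``normal into the unbounded component'' convention in an essential way. The delicate point is making the Alexandrov reflection genuinely work near the asymptotic boundary, where the ambient hyperbolic metric degenerates and one must show that the reflected portion of $\Sigma$ can actually be compared with $\Sigma$ all the way up to $\partial_\infty\bfH^{n+1}$. Once this is done, parts (b) and (c) are routine computations of the kind indicated above.
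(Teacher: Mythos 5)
Your parts (b) and (c) are in the spirit of the paper, but part (a) — which you correctly identify as carrying all the weight — rests on a method that cannot work under the hypotheses of the theorem. No curvature equation is assumed here: the only hypothesis on $\Sigma$ is the pointwise open condition $\kappa_i>0$. Alexandrov reflection therefore has nothing to bite on. At the first touching point between the reflected cap and the remaining piece one needs a tangency (strong maximum) principle, which requires both pieces to satisfy the same elliptic equation; strict convexity alone gives no one-sided comparison, since two strictly convex hypersurfaces tangent at a point with a common normal can cross (e.g.\ the graphs of $x_1^2+2x_2^2$ and $2x_1^2+x_2^2$). The same objection applies to your horosphere-barrier confinement step, which also invokes a comparison principle with no equation to compare. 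Moreover, even granting an equation, reflection in the vertical totally geodesic hyperplanes $\{x\cdot\omega=a\}$ — the only planar reflections that are isometries of $\bfH^{n+1}$ — controls injectivity along the horizontal direction $\omega$; it does not yield the vertical-line injectivity you claim, and reflection in horizontal planes, which is what the vertical-line test would need, is not a hyperbolic isometry. The paper's proof (Lemma~\ref{hb-lemma-L30}) is instead a short direct argument: let $t_0$ be the least $t$ such that $\Sigma\cap\{x_{n+1}\ge t\}$ is a vertical graph; if $t_0$ exceeded the boundary height there would be a point $p\in\partial\Sigma_{t_0}$ with horizontal normal, where \eqref{gsz-F10} forces all Euclidean principal curvatures $\kappa_i^E=\kappa_i/t_0>0$, contradicting the nonpositive curvature at $p$ of the section of $\Sigma$ by the vertical plane spanned by $\ve$ and $\nu(p)$. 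Some such argument using the structure of \eqref{gsz-F10} at points with horizontal normal is what is needed; the moving-plane scheme cannot be repaired within these hypotheses.

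For (b) you reproduce the paper's observation, except that the hyperbolic shape operator is $\tfrac1w\,(g^E)^{-1}\{\delta_{ij}+u_iu_j+uu_{ij}\}$, not $\tfrac1{u^2w}$ times it (harmless, since only positivity matters). For (c) you use the same test function $e^u\sqrt{1+|Du|^2}$ as Lemma~\ref{hb-lemma-L40}, but your route is different and does not close: contracting the second-order condition $\varphi_{ij}(x_0)\le 0$ with $\{\delta_{ij}+u_iu_j+uu_{ij}\}$ produces the third-order term $u_k u_{kij}$, which does not exist for a $C^2$ hypersurface and, even for smooth $u$, has no sign and cannot be eliminated because there is no equation; so this step does not yield $Du(x_0)=0$. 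The paper's argument is purely first order: at an interior maximum the critical-point identity $\sum_j u_j(\delta_{ij}+u_iu_j+u_{ij})=0$ together with positive definiteness of the relevant matrix forces $Du(x_0)=0$, and \eqref{gsz-I60} follows by comparing with the boundary values.
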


According to Theorem~\ref{gsz-th10}, Problem (\ref{gsz-I10})-(\ref{gsz-I20})
for complete locally strictly convex hypersurfaces reduces to the
Dirichlet problem for a fully nonlinear second order equation which we
shall write in the form
\begin{equation}
\label{gsz-I70}
%\begin{cases}
G(D^2u, Du, u) = \frac{\sigma}{u},
\;\; u > 0 \;\;\; \text{in $\Omega \subset \bfR^n$}
%\end{cases}
\end{equation}
with the boundary condition
\begin{equation}
\label{gsz-I80}
             u = 0 \;\;\;    \text{on $\partial \Omega$}.
\end{equation}
In particular, $\Gamma$ must be the boundary of some bounded domain
$\Omega$ in $\bfR^n$. The exact formula of $G$ will be given in
Section~\ref{gsz-F}.

We seek solutions of equation (\ref{gsz-I70}) satisfying
(\ref{gsz-I50}). Following the literature we call such solutions
{\em admissible}. By \cite{CNS3} condition~(\ref{gs5-I20}) implies
that equation (\ref{gsz-I70}) is elliptic for admissible solutions.
Our goal is to show that the Dirichlet problem
(\ref{gsz-I70})-(\ref{gsz-I80}) admits smooth admissible solutions
for all $0 < \sigma < 1$ which is also a necessary condition by the
comparison principle under conditions (\ref{gsz-I45}) and
(\ref{gsz-I40}), as we shall see in Section~\ref{gsz-L}.
Due to the special nature of the problem, there are substantial technical
difficulties to overcome and we have not yet succeeded in finding solutions
for all $\sigma \in (0,1)$. However we shall prove

\begin{theorem}
\label{gsz-th20}
Let $\Gamma = \partial \Omega \times \{0\} \subset \bfR^{n+1}$
where $\Omega$ is a bounded smooth domain in $\bfR^n$.
Suppose that $\sigma \in (0, 1)$ satisfies
$\sigma^2 > \frac18$.
Under conditions (\ref{gs5-I20})-(\ref{gs5-I45}) with $K = K_n^+$,
%(\ref{gs5-I45}), or (\ref{gs5-I50})-(\ref{gsz-I35}),
%or (\ref{gs5-I50}) and (\ref{gsz-I35w}),
there exists a complete locally strictly convex hypersurface
$\Sigma$ in $\bfH^{n+1}$ satisfying (\ref{gsz-I10})-(\ref{gsz-I20})
with uniformly bounded principal curvatures
\begin{equation}
\label{gsz-I100}
 |\kappa[\Sigma]| \leq C \;\; \mbox{on $\Sigma$}.
\end{equation}
Moreover, $\Sigma$ is the graph of an admissible solution $u \in C^\infty
(\Omega) \cap C^1 (\bar{\Omega})$ of the Dirichlet problem
(\ref{gsz-I70})-(\ref{gsz-I80}). Furthermore, $u^2 \in C^{\infty} (\Omega)\cap C^{1,1}(\ol{\Omega}) $
and
\begin{equation}
\label{hb-I20}
\begin{aligned}
&\,\sqrt{1 + |Du|^2} \leq \frac{1}{\sigma}, \;\; u|D^2 u| \leq C
\;\;\; \mbox{in $\Omega$}\\
&\, \sqrt{1 + |Du|^2} = \frac{1}{\sigma}
\;\;\; \mbox{on $\partial \Omega$}
\end{aligned}
\end{equation}
\end{theorem}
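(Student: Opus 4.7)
The plan is to apply the continuity method to the Dirichlet problem \eqref{gsz-I70}-\eqref{gsz-I80}, which by Theorem~\ref{gsz-th10} is equivalent to the geometric problem \eqref{gsz-I10}-\eqref{gsz-I20}. Concretely, I would embed the target problem into a one-parameter family indexed by $t\in[0,1]$ that starts from a problem whose admissible solution is known explicitly, e.g.\ by deforming $\Omega$ to a ball and letting $\sigma$ vary from a value near $1$ where a spherical cap provides the solution. Openness of the solvable set is furnished by the implicit function theorem using ellipticity \eqref{gs5-I20} and concavity \eqref{gs5-I30}; closedness is the content of uniform a priori estimates up to $C^{2,\alpha}(\ol{\Omega})$, which is where essentially all the work lies.

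The first ingredient is the $C^1$ bound, amounting to the first two inequalities in \eqref{hb-I20}. The interior control is automatic from \eqref{gsz-I60} of Theorem~\ref{gsz-th10}, reducing the global $C^1$ estimate to a boundary bound on $\sqrt{1+|Du|^2}$. I would construct upper and lower boundary barriers from spherical caps realizing the constant curvature $\sigma$ in $\bfH^{n+1}$, aligned tangentially to $\partial\Omega$; comparing $u$ against them pins $\sqrt{1+|Du|^2}=1/\sigma$ on $\partial\Omega$, which is both the boundary identity in \eqref{hb-I20} and the fixed Dirichlet gradient needed later. Combined with a $C^0$ bound from the same caps and the normalization \eqref{gsz-I45}, this yields the full $C^1$ estimate.

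The main obstacle is the $C^2$ estimate, and this is where the hypothesis $\sigma^2>\tfrac18$ must be used. Since the right-hand side $\sigma/u$ of \eqref{gsz-I70} blows up at $\partial\Omega$, one cannot expect $u\in C^2(\ol{\Omega})$; the natural regular object is $u^2$, and the target bound is $u|D^2u|\le C$. I would split the argument in the standard way: (i) a boundary second derivative bound, derived by using the tangential directions along $\partial\Omega$ (where Dirichlet data is zero and the gradient identity above pins the tangential Hessian of $u^2$), a mixed-derivative barrier respecting the degeneracy in $u$, and finally the double-normal direction via the equation itself together with the structural inequalities \eqref{gsz-I200} and \eqref{gsz-I210}; (ii) an interior Pogorelov-type bound, obtained by applying the maximum principle to a test function of the form $\eta(u)\log\kappa_{\max}+\varphi(|Du|^2)$, differentiating the equation twice, and using concavity \eqref{gs5-I30} to control the resulting quadratic form in the Hessian. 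The negative sectional curvature of the ambient hyperbolic metric produces sign-indefinite terms depending on $|Du|^2$ in this computation; the boundary gradient identity and \eqref{gsz-I60} give $|Du|^2\le 1/\sigma^2-1$, and the restriction $\sigma^2>\tfrac18$ is precisely the quantitative threshold that makes the residual coefficient favorable enough for absorption. I expect this absorption computation to be the technical heart of the proof.

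Once the uniform $C^2$ bound is in place, the equation becomes uniformly elliptic on the family of admissible solutions, so the Evans--Krylov theorem combined with \eqref{gs5-I30} yields a $C^{2,\alpha}$ estimate and Schauder bootstrapping gives $u\in C^\infty(\Omega)$. This closes the continuity method and delivers the existence of $\Sigma$. The regularity assertions $u\in C^1(\ol{\Omega})$ and $u^2\in C^{1,1}(\ol{\Omega})$ come directly from the boundary estimate, since that estimate is naturally phrased in terms of the Hessian of $u^2$, whose boundary gradient is prescribed by $\sqrt{1+|Du|^2}=1/\sigma$ on $\partial\Omega$; the bound \eqref{gsz-I100} on $|\kappa[\Sigma]|$ is then a direct reading of the $C^2$ estimate in geometric form.
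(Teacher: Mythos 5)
Your outline reproduces the broad architecture (reduce to the Dirichlet problem, $C^1$ bounds from equidistant-sphere barriers, boundary and global second derivative bounds, Evans--Krylov plus bootstrapping), but it has two genuine gaps at exactly the points where the paper has to work hardest. First, the openness step of your continuity method does not go through as stated. The linearization of $G(D^2u,Du,u)-\sigma/u$ contains the zeroth order coefficient $\tilde G_u$ in \eqref{gsz-185}, whose sign is not controlled (it can be positive), so the linearized Dirichlet problem need not be uniquely solvable and the implicit function theorem cannot be invoked merely from ellipticity \eqref{gs5-I20} and concavity \eqref{gs5-I30}. This is precisely why the paper does not run a continuity method on the original equation: it first regularizes the singular boundary condition to $u=\epsilon$ (Theorem \ref{gsz-th30}) and then solves by the monotone iteration \eqref{gsz-G350}--\eqref{gsz-G355}, in which the modified right-hand side $\frac1u\bigl(\sigma+\frac1\epsilon(u-u_k)\bigr)$ forces $\tilde G_u\le -c/u^2<0$ (see \eqref{gsz-380}), restoring invertibility of the linearization; uniform-in-$k$ curvature bounds then come from Theorem \ref{gsz-th-G9} via the recursion $M_{k+1}^2\le C\epsilon^{-4}+\tfrac12 M_k^2$. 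Your proposal neither regularizes the boundary condition nor addresses the sign of the zeroth order term, so both the openness argument and the function-space setting for the degenerate boundary are unsubstantiated.

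Second, the place where $\sigma^2>\tfrac18$ enters is asserted rather than proved, and your guessed mechanism is not the one that produces this threshold. In the paper the interior curvature bound independent of $\epsilon$ comes from maximizing $\kappa_{\max}/\phi(\eta,u)$ with the specific choice $\phi=\eta-a$, $\eta=\nu^{n+1}$, computed in the radial-graph formulation (Proposition \ref{gsz-prop5.1}, Lemma \ref{gsz-lem5.1}); at an interior maximum this yields \eqref{gsz-G500}, and everything hinges on the positivity on $[a,1]$ of the cubic $\gamma(y)=2y^3-2ay^2-2y+3a$ in \eqref{gsz-G550}, which holds exactly when $a^2>\tfrac18$. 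The hypothesis $\sigma^2>\tfrac18$ is then used together with the sharp gradient estimate $\nu^{n+1}\ge\sigma-C\epsilon$ of Lemma \ref{hb-lemma-L50} (formula \eqref{gsz-L160}) to choose $a$ with $\tfrac18<a^2<\sigma^2$ while keeping $\eta-a$ bounded below, giving $\kappa_1\le 16\sigma/\varepsilon_0$. A generic Pogorelov-type quantity $\eta(u)\log\kappa_{\max}+\varphi(|Du|^2)$, as you propose, has no evident reason to produce the constant $\tfrac18$, and since you explicitly defer the ``absorption computation,'' the technical heart of the theorem --- the only reason the restriction on $\sigma$ appears at all --- is missing from your argument. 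Relatedly, your double-normal boundary estimate should invoke the structure condition \eqref{gs5-I45} (as in the paper's proof of Theorem \ref{gsz-th-B10}); the inequalities \eqref{gsz-I200}--\eqref{gsz-I210} alone do not suffice there.
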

For Gauss curvature, $f (\lambda)=(\Pi \lambda_i)^{\frac1n}$,  Theorem \ref{gsz-th20} was proved by Rosenberg and Spruck \cite{RS94} who in fact allowed all $\sigma \in (0,1)$.\\

%The second step is to extend Theorem~\ref{gsz-th20} to all $\sigma \in
%(0, 1)$ by establishing boundary regularity for admissible solutions
%of the Dirichlet problem (\ref{gsz-I70})-(\ref{gsz-I80}), which
%asserts that the (hyperbolic) principal curvatures admit {\em a
%priori} uniformly bounds in a neighborhood of infinity. This will be
%carried out in a separate paper. It may be helpful to give a brief
%outline of the proof of Theorem~\ref{gsz-th20} in order to have some
 %idea why we need to approach the problem in these two steps.

As we  shall see in Section~\ref{gsz-F}, equation (\ref{gsz-I70}) is
singular where $u = 0$. It is therefore natural to approximate the
boundary condition (\ref{gsz-I80}) by
\begin{equation}
\label{gsz-I80'}
             u = \epsilon > 0 \;\;\;  \text{on $\partial \Omega$}.
\end{equation}
When $\epsilon$ is sufficiently small, the Dirichlet problem
(\ref{gsz-I70}),(\ref{gsz-I80'}) is solvable for all $\sigma \in (0, 1)$.

\begin{theorem}
\label{gsz-th30}
Let $\Omega$ be a bounded smooth domain in $\bfR^n$
and $\sigma \in (0, 1)$. Suppose $f$ satisfies
(\ref{gs5-I20})-(\ref{gs5-I45}) with $K = K_n^+$. Then for any
$\epsilon > 0$ sufficiently small, there exists an admissible
solution $u^{\epsilon} \in C^\infty (\bar{\Omega})$ of the Dirichlet
problem (\ref{gsz-I70}),(\ref{gsz-I80'}). Moreover, $u^{\epsilon}$
satisfies the {\em a priori} estimates
\begin{equation}
\label{hb-I15}
\sqrt{1 + |D u^{\epsilon}|^2} \leq \frac{1}{\sigma} + \epsilon C
\;\;\; \mbox{in $\Omega$}
\end{equation}
\begin{equation}
\label{hb-I25}
u^{\epsilon}|D^2 u^{\epsilon}| \leq \frac{C}{\epsilon^2}
\;\;\; \mbox{in $\Omega$}
\end{equation}
where $C$ is independent of $\epsilon$.
\end{theorem}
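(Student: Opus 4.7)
\textbf{Proof plan for Theorem \ref{gsz-th30}.} The proof follows the standard continuity method scheme: establish a priori estimates for admissible solutions of the Dirichlet problem (\ref{gsz-I70}),(\ref{gsz-I80'}) that depend on $\epsilon$ only through the stated factors, then deform through a one-parameter family to a trivial starting solution.

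First I would establish the $C^0$ bound $\epsilon \leq u^{\epsilon} \leq C_0(\Omega,\sigma)$. The lower bound is immediate since any interior minimum would have $Du=0$ and $D^2u\geq 0$, contradicting admissibility and the form of $G$ once the right-hand side $\sigma/u$ is taken into account. For the upper bound I would slide a large Euclidean hemisphere (which is a totally geodesic plane in $\bfH^{n+1}$, with all hyperbolic principal curvatures zero) from above until it touches the graph; together with the comparison principle from Section \ref{gsz-L} this yields a bound independent of $\epsilon$.

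Next, the $C^1$ estimate (\ref{hb-I15}). Theorem \ref{gsz-th10} already tells us that $e^u\sqrt{1+|Du|^2}$ attains its maximum on $\partial\Omega$ (or at a point governed by $\max e^u$), so it suffices to bound $|Du|$ on $\partial\Omega$. Here I would use as local barriers the spherical caps in $\bfR^{n+1}_+$ that meet $\partial_\infty\bfH^{n+1}$ at a fixed angle $\theta$ with $\cos\theta=\sigma$: these are precisely the equidistant hypersurfaces to a totally geodesic plane, and all their hyperbolic principal curvatures equal $\sigma$. Fitting such caps just below and just above the graph along $\partial\Omega$ and comparing with $u$ via the maximum principle pins the boundary slope to $\sqrt{1+|Du|^2}\leq 1/\sigma + C\epsilon$. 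The global estimate then follows from Theorem \ref{gsz-th10}. For the $C^2$ bound (\ref{hb-I25}), I would first handle the boundary, decomposing $D^2u|_{\partial\Omega}$ into tangential-tangential (controlled by differentiating the boundary condition twice), mixed tangential-normal (controlled via a barrier built from the $C^1$ barriers above and a linearized-equation argument), and pure normal-normal (extracted from the equation itself once the other components are known). The factor $\epsilon^{-2}$ enters at this stage, arising from the two differentiations of barriers that scale like $1/u$ near the boundary. The interior Hessian bound is then obtained by the classical maximum principle argument applied to a test function of the form $W=\log\kappa_{\max}+\phi(u,|Du|^2)$, exploiting concavity (\ref{gs5-I30}) together with (\ref{gsz-I200})--(\ref{gsz-I210}).

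With these estimates in place, ellipticity at admissible solutions is uniform on any compact subset of $\{u>0\}$, so Evans--Krylov yields $C^{2,\alpha}$ and Schauder bootstrap produces $C^{\infty}$ bounds depending only on $\epsilon$, $\sigma$, $\Omega$. For existence I would use the continuity method with parameter $t\in[0,1]$: replace $\sigma$ by $\sigma_t=(1-t)+t\sigma$ and note that at $t=0$, the horosphere $u\equiv\epsilon$ solves the problem because all its hyperbolic principal curvatures equal $1$ and $f(\mathbf 1)=1$. Openness follows from the implicit function theorem, since the linearization is a linear elliptic operator with nonpositive zeroth-order coefficient (after accounting for the sign of the right-hand side), hence invertible with Dirichlet data; closedness follows from the uniform estimates above. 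I expect the main obstacle to be the boundary $C^{2}$ estimate, and specifically the mixed and double-normal components: the singular character of $G$ at $u=0$ forces the barriers to capture the correct $\epsilon^{-2}$ scaling, and the lack of a comparable factor in the available gradient bound $1/\sigma+C\epsilon$ makes the bookkeeping delicate.
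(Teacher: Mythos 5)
Your overall architecture (a priori estimates through $C^{2,\alpha}$, Evans--Krylov, continuity method starting from the horosphere $u\equiv\epsilon$) looks like the natural route, and your estimate sketches are broadly in the spirit of Sections 3--5 of the paper. But there is a genuine gap at the step you dispose of in one clause: \emph{openness}. You claim the linearization of $G(D^2u,Du,u)-\sigma_t/u$ has a nonpositive zeroth-order coefficient and is therefore invertible. That is exactly what fails here. By \eqref{gsz-180}, for a right-hand side $\psi(x,u)$ with $\Psi=u\psi$ one has $\tilde G_u=\frac1{u^2}\big(\Psi-u\Psi_u-\frac1w\sum f_i\big)$; for $\Psi=\sigma_t$ constant this is $\frac1{u^2}\big(\sigma_t-\frac1w\sum f_i\big)$, and all that is available is $\sum f_i\ge 1$ (from \eqref{gsz-I210}) and $\frac1w\ge \sigma-C\epsilon$ (Lemma \ref{hb-lemma-L50}). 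So along your path $\sigma_t=(1-t)+t\sigma$ the coefficient can be positive (of size up to $(\sigma_t-\sigma+C\epsilon)/u^2$, large near $t=0$), the maximum principle is unavailable for the linearized operator, and the implicit function theorem does not apply as stated. This is not a technicality the estimates absorb: the paper flags it explicitly (Remark \ref{rem1} and the introduction) as the reason a direct continuity method is abandoned.

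The paper's actual proof is built around this obstruction. One constructs a monotone sequence $u_0\equiv\epsilon\le u_1\le\cdots$, where $u_{k+1}$ solves the penalized problem \eqref{gsz-G350}, $G=\frac1u\big(\sigma+\frac1\epsilon(u-u_k)\big)$, $u=\epsilon$ on $\partial\Omega$. The term $\frac1\epsilon(u-u_k)$ forces $\Psi_u=1/\epsilon$ to dominate, so by \eqref{gsz-380} the zeroth-order coefficient satisfies $G_u-\psi_u\le -c/u^2<0$, and each linearized problem is uniquely solvable; a continuity method in $t$ as in \eqref{gsz-G355} (not in $\sigma$) then solves each step, with closedness supplied by the boundary and global curvature estimates of Theorems \ref{gsz-th-B10} and \ref{gsz-th-G10}, which were deliberately proved for position-dependent right-hand sides of precisely this form. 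Finally, $k$-independent $C^2$ control (hence convergence of $u_k$ to the solution of \eqref{gsz-I70},\eqref{gsz-I80'}) comes from Theorem \ref{gsz-th-G9} applied with $\Psi=\sigma+\frac1\epsilon(u-u_k)$, yielding the recursion $M_{k+1}^2\le C\epsilon^{-4}+\frac12 M_k^2$ in \eqref{gsz-G400}, which iterates to $M_k\le C\epsilon^{-2}$ and gives \eqref{hb-I25}. Your proposal contains no mechanism playing the role of this penalization or of the $k$-recursion, so as written it does not close. (A minor further point: your interior lower bound argument is off --- admissibility is compatible with $D^2u\ge 0$; the correct observation is that at an interior minimum all hyperbolic principal curvatures are $\ge 1$, so $f\ge 1>u\psi$, forcing the minimum of $u$ to lie on $\partial\Omega$.)
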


 The organization of the paper is as follows. Section 2 summarizes the basic information
 about vertical and radial graphs that we will need in the sequel.  In section 3 we prove global gradient bounds and some sharp estimates on the vertical component of the upward normal near the boundary. These are essential 
 for the boundary second derivative estimates which we derive in Section 4. Here we make essential
 use of the exact form of the linearized operator to derive the mixed normal-tangential estimates and assumption \eqref{gs5-I45} for the pure normal second derivative estimate. In Section 5 we prove
 a maximum principle for the maxmum hyperbolic principle curvature. Our approach uses radial graphs
 and is new and rather delicate. It is here that we have had to restrict the allowable $\sigma\in (0,1)$
 to $\sigma^2>\frac18$. Otherwise our approach is completely general and we expect Theorem
 \ref{gsz-th20} is valid for all  $\sigma \in (0,1)$. Because the linearized operator is not necessarily
 elliptic we prove Theorem \ref{gsz-th20} by an iterative procedure which is carried out in Section 6. 
 Because of this, we have derived all our estimates for a fairly general class of hypersurfaces of
 prescribed curvature as a function of position.

\bigskip

\section{Formulas for hyperbolic principal curvatures}
\label{gsz-F}
\setcounter{equation}{0}

Let $\Sigma$ be a hypersurface in $\bfH^{n+1}$ and $g$ the induced metric
on $\Sigma$ from $\bfH^{n+1}$. For convenience we call $g$ the hyperbolic
metric, while the Euclidean metric on $\Sigma$ means the induced metric
from $\bfR^{n+1}$.
We use $X$ and $u$ to denote the position vector and {\em height function},
defined as
\[ u = X \cdot \ve, \]
of $\Sigma$ in $\bfR^{n+1}$, respectively. Here and throughout this
paper, $\ve$ is the unit vector in the positive $x_{n+1}$ direction
in $\bfR^{n+1}$, and `$\cdot$' denotes the Euclidean inner product
in $\bfR^{n+1}$. We assume $\Sigma$ is orientable and let $\vn$ be a
fixed global unit normal vector field to $\Sigma$ with respect to
the hyperbolic metric. This also determines an Euclidean unit normal
$\nu$ to $\Sigma$ by the relation
\[ \nu = \frac{{\bf n}}{u}. \]
We denote $\nu^{n+1} = \ve \cdot \nu$.

Let $\ve_1, \cdots, \ve_n$ be a local orthnormal frame of vector
fields on $(\Sigma, g)$. The second fundamental form of $\Sigma$ is
locally given by
\[ h_{ij} := \langle \nabla_{\ve_i} \ve_j, {\bf n} \rangle \]
where $ \langle \cdot, \cdot \rangle$ and $\nabla$ denote the metric
and Levi-Civita connection of $\bfH^{n+1}$ respectively. The
(hyperbolic) principal curvatures of $\Sigma$, denoted as $\kappa
[\Sigma] = (\kappa_1, \dots, \kappa_n)$, are the eigenvalues of the
second fundamental form. The relation between $\kappa
[\Sigma]$ and the Euclidean principal curvatures $\kappa^E [\Sigma]
= (\kappa_1^E, \dots, \kappa_n^E)$ is given by
\begin{equation}
\label{gsz-F10}
\kappa_i = u \kappa_i^E + \nu^{n+1} \;\;\; 1 \leq i \leq n.
\end{equation}

We shall derive equations for $\Sigma$ based on this formula when
$\Sigma$ satisfies (\ref{gsz-I10}).

\medskip
\subsection{Vertical graphs}
\label{gsz-FV}

Suppose $\Sigma$ is locally represented as the graph of a function
$u \in C^2 (\Omega)$, $u > 0$, in a domain $\Omega \subset \bfR^n$:
\[ \Sigma = \{(x, u (x)) \in \bfR^{n+1}: \; x \in \Omega\}. \]
In this case we take $\nu$ to be the upward (Euclidean) unit normal vector
field to $\Sigma$:
\[ \nu = \Big(\frac{-Du}{w}, \frac{1}{w}\Big), \;\; w=\sqrt{1+|Du|^2}. \]
The Euclidean metric %, its inverse matrix
and second fundamental form of $\Sigma$ are given respectively by
\[ g^E_{ij} = \delta_{ij} + u_i u_j, \]
 %  {g^E}^{ij} = \delta_{ij} - \frac{u_i u_j}{w^2}, \]
and
\[ h^E_{ij} = \frac{u_{ij}}{w}. \]
According to \cite{CNS4}, the Euclidean principal curvatures
$\kappa^E [\Sigma]$ are the eigenvalues of the symmetric matrix
$A^E [u] = [a^E_{ij}]$:
\begin{equation}
\label{gs5-D15}
 a^E_{ij} := \frac{1}{w} \gamma^{ik} u_{kl} \gamma^{lj},
\end{equation}
where
\[ \gamma^{ij} = \delta_{ij} - \frac{u_i u_j}{w (1 + w)}. \]
Note that the matrix $\{\gamma^{ij}\}$ is invertible with inverse
\[ \gamma_{ij} = \delta_{ij} + \frac{u_i u_j}{1 + w} \]
which is the square root of $\{g^E_{ij}\}$, i.e., $\gamma_{ik}
\gamma_{kj} = g^E_{ij}$.
%Similarly $\gamma^{ik} \gamma^{kj} = g^{ij}$.
>From \eqref{gsz-F10} we see that the hyperbolic principal curvatures
$\kappa [u]$ of $\Sigma$ are the eigenvalues of the matrix
$A^{\vv} [u] = \{a^{\vv}_{ij} [u]\}$:
\begin{equation}
\label{gsz-F20}
 a^{\vv}_{ij} [u] :=
\frac{1}{w} \Big(\delta_{ij}+ u\gamma^{ik} u_{kl} \gamma^{lj}\Big).
   % = \frac{1}{w} \gamma^{ik} U_{kl} \gamma^{lj},
\end{equation}
%where
%\[ U = \frac{1}{2} (|x|^2 + u^2). \]

Let $\mathcal{S}$ be the vector space of $n \times n$ symmetric matrices
and
\[ \mathcal{S}_K = \{A \in \mathcal{S}: \lambda (A) \in K\}, \]
where $\lambda (A) = (\lambda_1, \dots, \lambda_n)$ denotes the
eigenvalues of $A$. Define a function $F$ by
\begin{equation}
\label{gsz-F30}
F (A) = f (\lambda (A)), \;\; A \in \mathcal{S}_K.
\end{equation}

We recall some properties of $F$.
Throughout the paper we denote %$G[u] := G (D^2 u, Du, u)$ and
\begin{equation}
\label{gsz-F40}
F^{ij} (A) = \frac{\partial F}{\partial a_{ij}} (A), \;\;
  F^{ij, kl} (A) = \frac{\partial^2 F}{\partial a_{ij} \partial a_{kl}} (A).
\end{equation}
The matrix $\{F^{ij} (A)\}$, which is symmetric, has eigenvalues
$f_1, \ldots, f_n$, and therefore  is positive definite for
$A \in \mathcal{S}_K$ if $f$ satisfies (\ref{gs5-I20}),
 while (\ref{gs5-I30}) implies that $F$ is concave for
$A \in \mathcal{S}_K$ (see \cite{CNS3}), that is
\begin{equation}
\label{gsz-F50}
 F^{ij, kl} (A) \xi_{ij} \xi_{kl} \leq 0,
     \;\; \forall \; \{\xi_{ij}\} \in \mathcal{S}, \; A \in \mathcal{S}_K.
\end{equation}
We have %for $A = \{a_{ij}\}$, $\lambda (A) = (\lambda_1, \ldots, \lambda_n)$,
\begin{equation}
\label{gs5-P30}
 F^{ij} (A) a_{ij} = \sum f_i (\lambda (A)) \lambda_i,
\end{equation}
\begin{equation}
\label{gs5-P40}
F^{ij} (A) a_{ik} a_{jk} = \sum f_i (\lambda (A)) \lambda_i^2.
\end{equation}

Finally, the function $G$ in equation (\ref{gsz-I70}) is determined by
\begin{equation}
\label{gs5-F60}
G (D^2 u, Du, u) = \frac{1}{u} F (A^{\vv} [u]).
\end{equation}
where $A^{\vv} [u] = \{a^{\vv}_{ij} [u]\}$ is given by (\ref{gsz-F20}).
Note that
\begin{equation}
\label{gs5-F70}
\begin{aligned}
\,&G^{st}[u] := \frac{\partial G}{\partial u_{st}}=\frac{1}{w} F^{ij}\gamma^{is}\gamma^{tj}\\
\,&G^{st}[u] u_{st}=\frac1u \Big(F^{ij}a_{ij}-\frac1w\sum F^{ii}\Big)\\
\,&G_u=-\frac1{u^2 w}\sum F^{ii}
\end{aligned}
\end{equation}
and
\begin{equation}
\label{gs5-F80}
G^{pq, st} [u]:= \frac{\partial^2 G}{\partial u_{pq} \partial u_{st}}
=\frac{u}{w^2}  F^{ij,kl} \gamma^{is}\gamma^{tj} \gamma^{kp}\gamma^{ql}
\end{equation}
where $F^{ij} = F^{ij} (A^{\vv}[u])$, etc.
It follows that, under condition (\ref{gs5-I20}),
%therefore implies that $[G^{st}]$ is positive definite and
equation~(\ref{gsz-I70}) is elliptic
 for $u$ if  $A^{\vv}[u] \in \mathcal{S}_K$, while (\ref{gs5-I30})
implies that $G(D^2 u, Du, u)$ is concave with
respect to $D^2 u$.

For later use in section \ref{gsz-P}  note that if  u is a solution of
\begin{equation}
\label{gsz-175}
\tilde{G}(D^2 u, Du, u)=G(D^2 u, Du, u)-\psi(x,u)=0
\end{equation}
then from (\ref{gs5-F70})
\begin{equation}
\label{gsz-180}
\tilde{G}_u=\frac1{u^2} \Big(\Psi-u\Psi_u-\frac1w \sum f_i\Big)
\end{equation}
where $\Psi(x,u)=u\psi(x,u)$. Since $\sum f_i \geq 1$, we obtain from (\ref{gsz-180})
\begin{equation}
\label{gsz-185}
\tilde{G}_u  \leq \frac1{u^2} \Big(\Psi-u\Psi_u-\frac1w\Big).
\end{equation}

%for $u \in \mathcal{A}$ if $f$ satisfies (\ref{gs5-I30}).

\medskip

\subsection{Radial graphs}
\label{gsz-FR}

Let $\nabla$ denote the covariant derivative on the standard unit
sphere $\bfS^n$ in $\bfR^{n+1}$ and $y = \ve \cdot {\bf z}$ for
${\bf z} \in \bfS^n \subset \bfR^{n+1}$. Let $\tau_1, \cdots,
\tau_n$ be an orthnormal local frame of smooth vector
fields on the upper hemisphere $\bfS^n$ so that % We denote by
$\tau_i \cdot \tau_j = \delta_{ij} $.
For a function $v$ on $\bfS^n$,
we denote $v_i = \nabla_i v = \nabla_{\tau_i} v$,
$v_{ij} = \nabla_j \nabla_i v$, etc.

Suppose that locally $\Sigma$ is a radial graph over the upper hemisphere
$\bfS^n_+ \subset \bfR^{n+1}$, i.e., it is locally represented as
\begin{equation}
\label{gsz-F200}
X = e^v {\bf z}, \;\;\; {\bf z} \in \bfS^n_+ \subset \bfR^{n+1}.
\end{equation}
The Euclidean metric, outward unit normal vector and second
fundamental from of $\Sigma$ are
\[ g^E_{ij} = e^{2v} (\delta_{ij} + v_i  v_j) \]
\[ \nu = \frac{{\bf z} - \nabla v}{w}, \;\;\; w = (1 + |\nabla v|^2)^{1/2} \]
and
\[ h^E_{ij} = \frac{1}{w} e^v (v_{ij} - v_i v_j - \delta_{ij}) \]
respectively. Therefore the Euclidean principal curvatures are the
eigenvalues of the matrix
\begin{equation}
 a^E_{ij} = \frac{1}{w} e^{-v}
    \gamma^{ik}(v_{kl} - v_k v_l - \delta_{kl})  \gamma^{lj}
    = \frac{1}{w} e^{-v} (\gamma^{ik} v_{kl} \gamma^{lj} - \delta_{ij})
\end{equation}
where
\[ \gamma^{ij} = \delta_{ij} - \frac{v_i v_j}{w (1 + w)}. \]
Note that the height function $u = y e^v$. We see that the
hyperbolic principal curvatures are the eigenvalues of the matrix
$A^{\vs} [v] = \{a^{\vs}_{ij}[v]\}$:
\begin{equation}
\label{gsz-F20'}
 a^{\vs}_{ij}[v] := \frac{1}{w} (y \gamma^{ik} v_{kl} \gamma^{lj}
               - \ve \cdot \nabla v \delta_{ij}).
\end{equation}
In this case equation (\ref{gsz-I10}) takes the form
\begin{equation}
\label{gsz-F100}
F (A^{\vs} [v]) = \sigma.
\end{equation}
\bigskip

\section{Locally convex hypersurfaces. $C^1$ estimates}
\label{gsz-L}

\medskip

\subsection{Equidistance spheres}

There are two important facts which will be used repeatedly. One is
the invariance of equation (\ref{gsz-I10}) under scaling $X \mapsto
\lambda X$ in $\bfR^{n+1}$, as it is an isometry of $\bfH^{n+1}$.
The other is that the Euclidean spheres, known as equidistance
spheres, have constant hyperbolic principal curvatures. Let $B_R
(a)$ be a ball of radius $R$ centered at $a = (a', -\sigma R) \in
\bfR^{n+1}$ where $\sigma \in (0,1)$ and
 $S = \partial B_R (a) \cap \bfH^{n+1}$. Then $\kappa_i [S] = \sigma$ for all
$1 \leq i \leq n$ with respect to its outward normal. These spheres
may serve as barriers in many situations. Especially, we have the
following estimates which were first derived in \cite{GS00} for
hypersurfaces of constant mean curvature.

\begin{lemma}
\label{hb-lemma-L10}
Suppose $f$ satisfies (\ref{gs5-I20}), (\ref{gsz-I45}) and (\ref{gsz-I40}).
Let $\Sigma$ be a hypersurface in $\bfH^{n+1}$
with $\kappa [\Sigma] \in K$
and
\[ \sigma_1 \leq f (\kappa [\Sigma]) \leq \sigma_2 \]
where $0 \leq \sigma_1 \leq \sigma_2 \leq 1$ are constants,
and $\partial \Sigma \subset P (\epsilon) \equiv \{x_{n+1} = \epsilon\}$,
$\epsilon \geq 0$. Let $\Omega$ be the region in $\bfR^n$ bounded by
the projection of $\partial \Sigma$ to $\bfR^n = \{x_{n+1} = 0\}$
(such that $\bfR^n \setminus \Omega$ contains an unbounded component), and
$u$ denote the height function of $\Sigma$.

(i) For any point $(x, u) \in \Sigma$,
\begin{equation}
\label{hb-P105}
 \frac{\epsilon \sigma_2}{1+\sigma_2} + \mbox{d} (x) \sqrt{\frac{1-\sigma_2}{1+\sigma_2}}
  \leq u \leq
  \frac{L}{2} \sqrt{\frac{1-\sigma_1}{1+\sigma_1}} + \epsilon,
\end{equation}
where $d (x)$ and $L$ denote the distance from
$x \in \bfR^n$ to $\partial \Omega$ and
the (Euclidean) diameter of $\Omega$,  respectively.

(ii) Assume that $\partial \Sigma \in C^2$.
For $\epsilon > 0$ sufficiently small,
\begin{equation}
\label{hb-P110}
 \sigma_1 - \frac{\epsilon \sqrt{1-\sigma_1^2}}{r_1}
   - \frac{\epsilon^2 (1+\sigma_1)}{r_1^2}
   < \nu^{n+1}
        < \sigma_2 + \frac{\epsilon \sqrt{1-\sigma_2^2}}{r_2} +
        \frac{\epsilon^2 (1-\sigma_2)}{r_2^2}
\;\;\; \mbox{on $\partial \Sigma$}
\end{equation}
where $r_1$ and $r_2$ are
the maximal radii of exterior and interior spheres to
$\partial \Omega$, respectively.
In particular, if $\sigma_1 = \sigma_2 = \sigma$ then $\nu^{n+1} \rightarrow \sigma$
on $\partial \Sigma$ as $\epsilon \rightarrow 0$.
\end{lemma}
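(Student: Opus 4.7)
The plan is to use equidistance spheres as barriers and invoke the standard comparison principle for the prescribed curvature equation. First I would record the key geometric facts: the equidistance sphere $S_{a',R,\sigma}:=\partial B_R(a',-\sigma R)\cap \bfH^{n+1}$ has all hyperbolic principal curvatures equal to $\sigma$ with respect to its outward Euclidean normal; its cross-section with the horizontal plane $\{x_{n+1}=\epsilon\}$ is the Euclidean ball of radius $\rho(\epsilon,R,\sigma):=\sqrt{R^2(1-\sigma^2)-2\epsilon\sigma R-\epsilon^2}$ centered at $a'$; its apex sits at height $R(1-\sigma)$; and the outward Euclidean normal at a point $(x,\epsilon)$ of the cross-section boundary has vertical component $\sigma+\epsilon/R$. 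Under \eqref{gs5-I20}--\eqref{gs5-I30}, the operator $F$ is concave and elliptic on $\mathcal{S}_K$, and the standard comparison principle applies: if $\Sigma_1,\Sigma_2$ are $C^2$ graphs over a common domain with $\Sigma_2\ge\Sigma_1$ on the boundary and $f(\kappa[\Sigma_1])\le f(\kappa[\Sigma_2])$, then $\Sigma_2\ge\Sigma_1$ throughout.

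For the upper bound in (i), choose $a'$ with $\Omega\subset\overline{B_{L/2}(a')}$ and select $R_1$ such that $\rho(\epsilon,R_1,\sigma_1)=L/2$. The equidistance sphere $S_{a',R_1,\sigma_1}$ has curvature $\sigma_1\le f(\kappa[\Sigma])$ and its $\epsilon$-cross-section encloses the horizontal projection of $\partial\Sigma$, so comparison places $\Sigma$ below this sphere, yielding $u\le R_1(1-\sigma_1)$. Solving the defining quadratic for $R_1$ and applying $\sqrt{a^2+b^2}\le a+b$ (valid for $a,b\ge 0$) then yields the stated $u\le (L/2)\sqrt{(1-\sigma_1)/(1+\sigma_1)}+\epsilon$. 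For the lower bound, at each $x\in\Omega$ choose $R_2$ with $\rho(\epsilon,R_2,\sigma_2)=d(x)$, so $S_{x,R_2,\sigma_2}$ has $\epsilon$-cross-section $B_{d(x)}(x)\subset\Omega$; comparison on this cross-section (exploiting that the locally convex $\Sigma$ lies above the horosphere $\{x_{n+1}=\epsilon\}$) places $\Sigma$ above the sphere, and evaluating at the apex gives $u(x)\ge R_2(1-\sigma_2)$, which simplifies to the claimed inequality.

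For (ii), at a boundary point $p=(x^*,\epsilon)\in\partial\Sigma$ I would localize the construction using osculating balls. For the upper bound on $\nu^{n+1}$, take the interior osculating ball $B_{r_2}(y_2)\subset\Omega$ tangent to $\partial\Omega$ at $x^*$ and use the equidistance sphere $S_+$ of curvature $\sigma_2$ whose $\epsilon$-cross-section equals $B_{r_2}(y_2)$. Comparison on $B_{r_2}(y_2)$ gives $S_+\le\Sigma$ with equality at $p$; the common tangency of $\partial B_{r_2}(y_2)$ and $\partial\Omega$ at $x^*$ forces the tangential derivatives of $u$ and of the $S_+$ graph to vanish there, so the inequality transfers to $|Du|(x^*)\ge|Du_{S_+}|(x^*)$, whence $\nu^{n+1}(\Sigma)(p)\le\nu^{n+1}(S_+)(p)=\sigma_2+\epsilon/R$. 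Solving $r_2^2=R^2(1-\sigma_2^2)-2\epsilon\sigma_2 R-\epsilon^2$ for $R$ and applying $\sqrt{a^2+b^2}\le a+b$ once more produces the sharp $\epsilon^2(1-\sigma_2)/r_2^2$ correction. The lower bound on $\nu^{n+1}$ proceeds analogously using the exterior osculating ball $B_{r_1}(y_1)\subset\bfR^n\setminus\Omega$ and a sphere of curvature $\sigma_1$ acting as an upper barrier above $\Sigma$ near $p$.

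The main obstacle is constructing this upper barrier for the lower bound on $\nu^{n+1}$: the equidistance sphere whose $\epsilon$-cross-section equals $B_{r_1}(y_1)$ projects onto $\bfR^n\setminus\Omega$ and so cannot be directly compared with $\Sigma$ as a graph over $\Omega$. The remedy, following the mean-curvature construction in \cite{GS00}, is to use instead an equidistance sphere tangent to $\partial\Omega$ at $x^*$ from outside and to exploit the exterior osculating-ball geometry to verify that the sphere dominates $\Sigma$ in a neighborhood of $p$; the sharp $\epsilon^2$ correction then emerges from the same algebraic manipulation. Throughout, the essential analytic input is only the classical comparison principle for concave elliptic curvature operators.
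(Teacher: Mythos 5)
Your approach---equidistance spheres as barriers plus a comparison/maximum principle---is precisely the one the paper intends: the paper omits the proof and points to \cite{GS00}, where the mean curvature case is handled exactly this way, and your algebra in part (i) (solving $\rho^2=R^2(1-\sigma^2)-2\epsilon\sigma R-\epsilon^2$ for $R$ and using $\sqrt{a^2+b^2}\le a+b$) does reproduce the stated constants. Two points, however, need repair. First, you state the comparison principle with the monotonicity reversed. For this problem the \emph{less} curved surface lies above: over a fixed boundary circle of radius $\rho$ at height $\epsilon$, the equidistance cap of curvature $\sigma$ has apex height $\bigl(\epsilon\sigma+\sqrt{\epsilon^2+(1-\sigma^2)\rho^2}\bigr)/(1+\sigma)$, which decreases in $\sigma$. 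Thus the upper barrier must satisfy $f(\kappa[S])=\sigma_1\le f(\kappa[\Sigma])$, which is how you in fact use it in every application; but the principle as you wrote it (that $f(\kappa[\Sigma_1])\le f(\kappa[\Sigma_2])$ forces $\Sigma_2\ge\Sigma_1$) would yield the opposite conclusion to the one you draw. Second, the $\Sigma$ of the lemma is not assumed to be a graph, nor locally convex, nor to lie above $P(\epsilon)$ (your own lower bound in (i) permits $u<\epsilon$ near $\partial\Omega$), so the graph-over-a-common-domain comparison and the parenthetical appeal to convexity are not literally available.

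The clean implementation, and the one behind \cite{GS00}, is a sliding first-contact argument with the one-parameter family of spheres. For instance, for the lower bound in (i), slide the ball $B_R(x,-\sigma_2R-s)$ up from $s$ large (where it lies below $\{x_{n+1}=0\}$ and hence misses $\Sigma$); for $s>0$ its cross-section at height $\epsilon$ is strictly inside $B_{d(x)}(x)\subset\Omega$, so a first contact cannot occur on $\partial\Sigma$, and at an interior tangency one compares principal curvatures componentwise with respect to the common normal and uses (\ref{gs5-I20}), (\ref{gsz-I45}), (\ref{gsz-I40}) to contradict $f(\kappa[\Sigma])\le\sigma_2$. The same mechanism furnishes the upper barrier you leave unresolved for the lower bound on $\nu^{n+1}$ in (ii): one uses spheres centered \emph{above} the plane, $B_R(y_1',+\sigma_1R)$, which have curvature $\sigma_1$ with respect to the inward normal and whose $\epsilon$-cross-section, of radius $\sqrt{R^2(1-\sigma_1^2)+2\epsilon\sigma_1R-\epsilon^2}=r_1$, sits inside the exterior tangent ball; $\Sigma$ is shown to stay outside this ball, and comparing normals at the tangency point $p$ gives the stated lower bound. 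Deferring that construction to \cite{GS00} is no worse than what the paper itself does, but it is exactly the step where the orientation bookkeeping (which normal the curvatures are computed with, and on which side the contact occurs) genuinely matters, so it should not be waved through.
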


While Lemma~\ref{hb-lemma-L10} was proved in \cite{GS00} only for the
mean curvature case, the proof remains valid for more general symmetric
functions of principal curvatures with minor modifications.
So we omit the proof here.

Another important class of hypersurfaces of constant principal
curvatures are the horospheres $P (\epsilon) \equiv \{x_{n+1} =
\epsilon\}$, $\epsilon > 0$. Indeed, from (\ref{gsz-F10}) we see
that $\kappa [P (\epsilon)] = {\bf 1}$. By the comparison principle
we immediately obtain the following necessary condition for the
solvability of problem (\ref{gsz-I10})-(\ref{gsz-I20}).

\begin{lemma}
\label{hb-lemma-L20}
Suppose that $f$ satisfies (\ref{gs5-I20}) and (\ref{gsz-I45}),
and that there is a hypersurface $\Sigma$ in $\bfH^{n+1}$
which satisfies (\ref{gsz-I10}) and (\ref{gsz-I20}) with
$\kappa [\Sigma] \in K$. Then $\sigma < 1$.
\end{lemma}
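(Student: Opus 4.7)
The plan is to compare $\Sigma$ with a horosphere placed at its maximum Euclidean height. Formula (\ref{gsz-F10}) gives $\kappa[P(\epsilon)]=\mathbf{1}$ for every horosphere $P(\epsilon)=\{x_{n+1}=\epsilon\}$ (indeed, $u=\epsilon$, $\kappa^E=0$, $\nu^{n+1}=1$), so by (\ref{gsz-I45}) these satisfy $f(\kappa[P(\epsilon)])=1$. The compactness of $\partial\Sigma\subset\bfR^n$ together with hyperbolic completeness forces $\Sigma$ to be bounded in $\bfR^{n+1}$: otherwise some sequence $\{p_k\}\subset\Sigma$ with $|p_k|_{\text{Eucl}}\to\infty$ would, by completeness, escape every hyperbolic compact, and its Euclidean limit could only be the point at infinity of $\partial_\infty\bfH^{n+1}$, contradicting the hypothesis that $\partial\Sigma$ is bounded in $\bfR^n$. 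Consequently $u_{\max}:=\max_{\Sigma}x_{n+1}$ is attained at an interior point $p_0=(x_0,u_{\max})\in\Sigma$ (since $x_{n+1}=0$ on $\partial\Sigma$).

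At $p_0$ the tangent hyperplane to $\Sigma$ is horizontal, and since the unbounded component of $\bfR^{n+1}_+\setminus\Sigma$ lies above $\Sigma$ there, the oriented unit normal is $\nu=\ve$, giving $\nu^{n+1}(p_0)=1$. Writing $\Sigma$ locally as a vertical graph $x_{n+1}=u(x)$ with $u(x_0)=u_{\max}$ a maximum, we have $Du(x_0)=0$ and $D^2u(x_0)\leq 0$, so every Euclidean principal curvature $\kappa_i^E(p_0)$ (an eigenvalue of $u_{ij}(x_0)$ since $w=1$ there) is nonpositive. Then (\ref{gsz-F10}) yields
\[
\kappa_i[\Sigma](p_0)\;=\;u_{\max}\,\kappa_i^E(p_0)+1\;\leq\;1,\qquad 1\leq i\leq n.
\]
Since $\kappa[\Sigma](p_0)\in K$, the monotonicity (\ref{gs5-I20}) and normalization (\ref{gsz-I45}) give
\[
\sigma\;=\;f(\kappa[\Sigma](p_0))\;\leq\;f(\mathbf{1})\;=\;1.
\]

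To promote this to the strict inequality $\sigma<1$, I would rule out equality. If $\sigma=1$, the strict monotonicity $f_i>0$ combined with $\kappa[\Sigma](p_0)\leq\mathbf{1}$ and $f(\kappa[\Sigma](p_0))=f(\mathbf{1})$ forces $\kappa[\Sigma](p_0)=\mathbf{1}$. Then both $u$ and the constant function $\tilde u\equiv u_{\max}$ solve the equation $F(A^{\vv}[\,\cdot\,])=1$ in a neighborhood of $x_0$, with $u\leq\tilde u$ and equality at $x_0$; the equation is strictly elliptic there because $\mathbf{1}\in K$ and $F^{ij}$ is positive definite on $\mathcal{S}_K$. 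The strong maximum principle, applied to $\tilde u-u\geq 0$ (which satisfies a linear elliptic equation with coefficients obtained by integrating the linearization of $F$ along the segment joining $D^2 u$ to $D^2\tilde u$, a segment that stays in $\mathcal{S}_K$ near $x_0$ by continuity), forces $u\equiv u_{\max}$ in a neighborhood of $x_0$. Propagating by connectedness, $\Sigma$ would coincide with the horosphere $P(u_{\max})$, whose asymptotic boundary is not compact in $\bfR^n$ — contradiction. Hence $\sigma<1$. The only delicate step is this strict-inequality argument; the weak inequality $\sigma\leq 1$ is an immediate interior-maximum comparison once the Euclidean boundedness of $\Sigma$ is noted.
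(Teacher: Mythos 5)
Your proof is correct and takes essentially the same route as the paper: the paper's entire proof is the observation that horospheres satisfy $\kappa[P(\epsilon)]={\bf 1}$, hence $f(\kappa[P(\epsilon)])=1$ by (\ref{gsz-I45}), followed by a one-line appeal to the comparison principle. Your interior-tangency computation at the highest point of $\Sigma$, together with the strong maximum principle ruling out the equality case $\sigma=1$, is precisely that comparison argument written out in detail.
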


%\bigskip

\medskip

\subsection{Locally strictly convex hypersurfaces}
We now consider hypersurfaces of positive principal curvatures
in $\bfH^{n+1}$; we call such hypersurfaces {\em locally strictly convex}.

\begin{lemma}
\label{hb-lemma-L30}
Let $\Sigma \subset \{x_{n+1} \geq c\}$ be a locally strictly convex
hypersurface of class $C^2$ in $\bfH^{n+1}$ with compact
(asymptotic) boundary
 $\partial \Sigma \subset \{x_{n+1} = c\}$ for some constant
$c \geq 0$.
%Suppose $\Sigma \subset \{x_{n+1} \geq c\}$ and
%$\partial \Sigma \subset \{x_{n+1} = c\}$ for some constant $c \geq 0$.
Then $\Sigma$ is a vertical graph. In particular, $\partial \Sigma$ is
is boundary of a bounded domain in $\{x_{n+1} = c\}$.
\end{lemma}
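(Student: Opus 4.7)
The goal is to show that the vertical projection $\pi : \Sigma \to \bfR^n$ is a diffeomorphism onto a bounded domain, making $\Sigma$ the graph of a positive function. I split this into showing (i) the Euclidean upward normal component $\nu^{n+1} > 0$ on $\Sigma \setminus \partial\Sigma$, so $\pi$ is a local diffeomorphism, and (ii) $\pi$ is globally injective.

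If $\Sigma \not\subset P(c)$, the height function $u := x_{n+1}|_\Sigma$ attains a maximum $M > c$ at an interior point $p_0$. At $p_0$, the tangent plane is horizontal and, by the orientation convention (the normal points to the unbounded component of $\bfR^{n+1}_+\setminus\Sigma$), $\nu(p_0) = \ve$ and $\nu^{n+1}(p_0) = 1$. For (i), suppose for contradiction that $\nu^{n+1}(p_*) = 0$ at some interior $p_*$. By the identity $\kappa_i = u\kappa_i^E + \nu^{n+1}$ from \eqref{gsz-F10}, the Euclidean principal curvatures at $p_*$ are $\kappa_i^E(p_*) = \kappa_i(p_*)/u(p_*) > 0$, so $\Sigma$ is Euclidean strictly convex at $p_*$ with a vertical Euclidean tangent hyperplane $T_*$; crucially, $T_*$ is itself a totally geodesic hyperbolic hyperplane (zero hyperbolic principal curvatures), and $\Sigma$ lies locally on the $\nu(p_*)$-side of $T_*$. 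I would then run an Alexandrov-type moving-plane argument with the family of vertical hyperplanes parallel to $T_*$: slide such a hyperplane from far on one side toward $\Sigma$ until first contact, and use the strong maximum principle — comparing $\Sigma$ (of strictly positive hyperbolic curvatures) against a barrier of zero hyperbolic curvature — to rule out tangential second contacts, forcing $\Sigma$ to lie globally on one side. The compactness of $\partial\Sigma \subset P(c)$ is essential to prevent the sliding barrier from escaping to the ideal boundary. The symmetric slide from the opposite side, combined with the fixed location of the maximum $p_0$, then yields the desired contradiction.

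Once $\nu^{n+1}>0$ is established on $\Sigma\setminus\partial\Sigma$, for (ii) I suppose two interior points $P_1 = (x_0, t_1)$ and $P_2 = (x_0, t_2)$ with $c < t_1 < t_2$ both lie on $\Sigma$. The vertical line through these points lies in the unbounded component $U_\infty$ of $\bfR^{n+1}_+\setminus\Sigma$ both for large $x_{n+1}$ (above $P_2$) and for $x_{n+1}$ just below $P_1$ (having passed through a bounded component between the two crossings); hence at $P_1$ the outward normal points downward, so $\nu^{n+1}(P_1) < 0$, contradicting (i). Therefore $\pi$ is injective, and $\Sigma$ is a vertical graph over the bounded domain $\Omega = \pi(\Sigma) \subset \bfR^n$.

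The main obstacle is the moving-plane argument in step (i): propagating local strict convexity to a global one-sidedness statement requires careful choice of the barrier family of hyperbolic hyperplanes, application of the strong maximum principle to exclude tangential second contacts during the slide, and essential use of the compactness of $\partial\Sigma$ to keep the barriers from escaping to the ideal boundary of $\bfH^{n+1}$.
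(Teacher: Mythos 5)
Your reduction to proving $\nu^{n+1}>0$ at interior points is reasonable, but the moving-plane argument you propose for step (i) — which is the heart of the lemma — does not work. At a first contact between a sliding vertical (totally geodesic) hyperplane and $\Sigma$, the plane is the tangent plane of $\Sigma$ at the contact point and, by local strict convexity, $\Sigma$ lies locally on the $\nu$ side of it while $f(\kappa[\Sigma])>0=f(\kappa[\text{plane}])$. This is the \emph{consistent} configuration for the comparison principle (the surface of larger curvature sitting on the convex side of the one of smaller curvature), so the strong maximum principle yields no contradiction and there are no "tangential second contacts" to rule out; the first contact may also occur on $\partial\Sigma$, where there is no tangency information, and a genuine Alexandrov reflection is unavailable since $\partial\Sigma$ has no symmetry. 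Moreover, even if you knew $\Sigma$ lies globally on one side of $T_*$, that would not contradict $\nu^{n+1}(p_*)=0$: a hyperbolic geodesic sphere (a Euclidean sphere with center at height $c_h>R$), oriented by its inner normal, is locally strictly convex, lies on one side of every one of its vertical tangent planes, and its portion above $\{x_{n+1}=c\}$ with $c_h-R<c<c_h$ has compact boundary in $\{x_{n+1}=c\}$, has interior points with horizontal normal, and is not a graph. So convexity, one-sidedness and the location of $\partial\Sigma$ alone can never give step (i); the contradiction must couple the point with horizontal normal to the orientation of the piece of $\Sigma$ lying \emph{above} it, and your closing sentence ("the symmetric slide from the opposite side, combined with the fixed location of $p_0$") is not an argument supplying that coupling.

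The paper's proof provides exactly this missing link, by descending from the top rather than sliding sideways: let $t_0$ be the smallest $t\ge c$ such that $\Sigma_t=\Sigma\cap\{x_{n+1}\ge t\}$ is a vertical graph (nonempty, since the cap near the highest point is a graph with upward normal). If $t_0>c$, there is a point $p$ at height $t_0$ with $\nu^{n+1}(p)=0$; by (\ref{gsz-F10}) all Euclidean principal curvatures there equal $\kappa_i/t_0>0$, so the normal section of $\Sigma$ by the vertical $2$-plane spanned by $\ve$ and $\nu(p)$ must curve strictly toward $\nu(p)$; but because $\Sigma$ just above level $t_0$ is a graph (with upward normal) folding over at $p$, that section bends away from $\nu(p)$, i.e.\ has nonpositive curvature with respect to $\nu(p)$ — a contradiction, so $t_0=c$ and the graph property (hence injectivity of the projection) follows at once. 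Note also that your separate injectivity step (ii) is shaky as written: when $\partial\Sigma\neq\emptyset$ (in particular for $c>0$) the complement $\bfR^{n+1}_+\setminus\Sigma$ is connected, so there need be no bounded component for the vertical segment to "pass through," and vertical lines may meet $\Sigma$ tangentially, so the parity reasoning requires justification; in the paper's argument this step is subsumed in the descent.
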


\begin{proof}
Let $T$ be the set of $t \geq c$ such that $\Sigma_t := \Sigma \cap
\{x_{n+1} \geq t\}$ is a vertical graph and let $t_0$ be the minimum
of $T$ which is clearly nonempty. Suppose $t_0 > c$. Then there must
be a point $p \in \partial \Sigma_{t_0}$ with $\nu^{n+1} (p) = 0$,
that is, the normal vector to $\Sigma$ at $p$ is horizontal. It
follows from (\ref{gsz-F10}) that $\kappa_i^E = \kappa_i/t_0 > 0$
for all $1 \leq i \leq n$ at $p$. On the other hand, the curve
$\Sigma \cap P$ (near $p$) clearly has nonpositive curvature at $p$
(with respect to the normal $\nu (p)$), where $P$ is the plane
through $p$ spanned by $\ve$ and $\nu (p)$. This is a contradiction,
proving that $t_0 = c$.
\end{proof}

By the formula (\ref{gsz-F20}) the graph of a function $u$ is
locally strictly convex if and only if the function
$U = |x|^2 + u^2$
is (locally) strictly convex, i.e., its Hessian $D^2 U$ is positive definite.
%Consequently, $\Sigma$ is locally strictly convex if and only if $U$
%is strictly convex, i.e., $D^2 U$ is positive definite in $\Omega$.
We define the class of {\em admissible} functions in a domain
$\Omega \subset \bfR^n$ as
\begin{equation}
\label{gsz-E30}
\mathcal{A} (\Omega) =
 \left\{u\in C^2 (\Omega): u > 0, \text
{$|x|^2 + u^2$ is locally strictly convex in $\Omega$}\right\}.
\end{equation}
By the convexity of $|x|^2 + u^2$ we immediately have
\begin{equation}
\label{gsz-L130}
|Du| \leq \frac{1}{u} \left(L + \max_{\partial \Omega} u |Du|\right)
\end{equation}
where $L$ is the diameter of $\Omega$. The following gradient estimate,
which improves (\ref{gsz-L130}) in the sense that it is independent of the
(positive) lower bound of $u$, will be crucial to our results.

\begin{lemma}
\label{hb-lemma-L40}
Let $u \in \mathcal{A} (\Omega)$. Then
\begin{equation}
\label{gsz-L140}
 e^u \sqrt{1 + |Du|^2} \leq  \max \Big\{\sup_{\Omega} e^u,
     \max_{\partial \Omega} e^u \sqrt{1 + |Du|^2}\Big\}.
\end{equation}
\end{lemma}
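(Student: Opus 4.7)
The plan is to apply the maximum principle to $\phi := e^u\sqrt{1+|Du|^2}$ in $\overline{\Omega}$. If $\phi$ attains its maximum on $\partial\Omega$ the conclusion is immediate, so suppose there is an interior maximum at $x_0\in\Omega$. With $w:=\sqrt{1+|Du|^2}$, the vanishing of $\nabla(\log\phi)$ at $x_0$ gives $u_i+u_ku_{ki}/w^2=0$ for all $i$, i.e.\ $D^2u\cdot Du=-w^2\,Du$. Choosing orthonormal coordinates at $x_0$ with $Du(x_0)=|Du(x_0)|\,e_1$, this reduces to $u_{11}(x_0)=-w^2$ and $u_{1j}(x_0)=0$ for $j\geq 2$.

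Next I substitute this structure into the hyperbolic convexity matrix $M_{ij}:=\delta_{ij}+u_iu_j+uu_{ij}$, which is positive definite since $u\in\mathcal{A}(\Omega)$. One computes $M_{11}(x_0)=1+|Du(x_0)|^2+u(x_0)u_{11}(x_0)=w^2(1-u(x_0))$, so positivity forces $u(x_0)<1$ whenever $|Du(x_0)|\neq 0$. If $|Du(x_0)|=0$, then $w(x_0)=1$ and $\phi(x_0)=e^{u(x_0)}\leq \sup_\Omega e^u$, and we are done. In the remaining case $|Du(x_0)|\neq 0$, I combine the second-order condition $D^2(\log\phi)(x_0)\preceq 0$ with the positivity of $M$ restricted to $(Du)^\perp$, namely $I+uB>0$ for the block $B=(u_{ij})_{2\leq i,j\leq n}$; this yields eigenvalue bounds $\lambda_i>-1/u(x_0)$ on the remaining eigenvalues of $D^2u$. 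Expanding
\[
(\log\phi)_{ij}(x_0)=u_{ij}+\frac{u_{ki}u_{kj}+u_ku_{kij}}{w^2}-2u_iu_j
\]
and tracing against the inverse matrix $M^{-1}$ (which defines an elliptic operator by hyperbolic convexity), the block-diagonal structure of $M$ in the adapted coordinates should let one cancel the cubic-derivative term $u_ku_{kij}/w^2$ against the diagonal contributions and extract a strictly positive lower bound on $\operatorname{tr}(M^{-1}D^2(\log\phi))(x_0)$, contradicting the interior-maximum inequality and forcing $|Du(x_0)|=0$.

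The main obstacle is exactly this cancellation. The first-order identity $u_ku_{ki}=-u_iw^2$ holds only pointwise at $x_0$ and cannot be differentiated freely, so the third-derivative contributions $u_ku_{kij}$ appearing in $D^2(\log\phi)$ must be absorbed purely by the structural strict convexity of $|x|^2+u^2$—no PDE for $u$ is available at this stage. The delicate point is therefore the precise interplay between the eigenvalue bound $\lambda_i>-1/u$ on $(Du)^\perp$ and the relation $u_{11}=-w^2$ along $Du$, which together with $u(x_0)<1$ should make the trace against $M^{-1}$ strictly positive and thereby preclude an interior maximum with nonvanishing gradient.
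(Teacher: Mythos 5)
Your first-order analysis is correct and is in fact more careful than what is needed to reproduce the paper's argument: at an interior maximum with $Du(x_0)\neq 0$ you get $u_{11}=-w^2$, $u_{1j}=0$ for $j\geq 2$, and testing the admissibility matrix $M_{ij}=\delta_{ij}+u_iu_j+uu_{ij}$ against $Du$ yields only $u(x_0)<1$, not a contradiction. The gap is exactly the step you flag yourself: the hoped-for cancellation when tracing the second-order condition against $M^{-1}$. That step cannot be carried out. The matrix $D^2\log\phi(x_0)$ contains the term $u_ku_{kij}$, and since $u$ satisfies no equation, the third derivatives at $x_0$ are completely unconstrained by admissibility, which is an open condition on the $2$-jet. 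Worse, the configuration you are trying to exclude is realizable: take $\Omega=B_\delta(0)$ and
\[ u(x)=\tfrac12+x_1-x_1^2-c\,|x'|^2,\qquad x'=(x_2,\dots,x_n), \]
with $\delta$ and $c>0$ small. Then $u>0$ and $\{\delta_{ij}+u_iu_j+uu_{ij}\}>0$ on $\Omega$, so $u\in\mathcal{A}(\Omega)$; yet $e^u\sqrt{1+|Du|^2}$ attains a strict interior maximum at the origin, where $|Du|=1$, all third derivatives vanish, and $D^2\log\bigl(e^u\sqrt{1+|Du|^2}\bigr)$ is negative definite, while the boundary maximum of $e^u\sqrt{1+|Du|^2}$ and $\sup_\Omega e^u$ are both strictly smaller than the interior value $e^{1/2}\sqrt2$. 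So no pointwise argument at the maximum point --- yours or any other --- can force $Du(x_0)=0$; the proposed completion is a dead end, not merely unfinished.

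For comparison, the paper's proof is a one-line first-derivative argument: the critical-point equation is written as $\sum_j u_j(\delta_{ij}+u_iu_j+u_{ij})=0$ and $Du(x_0)=0$ is deduced from the asserted positive definiteness of $\{\delta_{ij}+u_iu_j+u_{ij}\}$. But this is not the admissibility matrix: $u\in\mathcal{A}(\Omega)$ gives positivity of $\{\delta_{ij}+u_iu_j+uu_{ij}\}$, and the former follows from the latter only where $u\geq 1$. Your contraction showing $u(x_0)<1$ is precisely the observation that the paper's positivity claim is not automatic at low height, and the example above shows that both the claim and the stated inequality can fail there. The paper's one-line argument does work verbatim for the quantity $u\sqrt{1+|Du|^2}$, whose critical-point equation is $\sum_j u_j(\delta_{ij}+u_iu_j+uu_{ij})=0$ and so contracts against the genuine admissibility matrix. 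In short, the obstacle you identified is real and reflects a discrepancy in the lemma as stated and proved (an $e^u$ versus $u$ issue), rather than a trick you are missing; but as a proof of the statement as given, your proposal is incomplete, and its concluding strategy cannot be repaired.
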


\begin{proof}
If $e^u \sqrt{1 + |Du|^2}$ attains its maximum at an
interior point $x_0 \in \Omega$ then at $x_0$,
\[  \sum_j u_j (\delta_{ij} + u_i u_j + u_{ij})
    = e^{-u} \frac{\partial}{\partial x_i} (e^u \sqrt{1 + |Du|^2}) = 0 , \;\;
   \forall \; 1 \leq i \leq n. \]
If follows that $D u (x_0) = 0$ as the matrix $\{\delta_{ij} +
u_i u_j + u_{ij}\}$ is positive definite.
\end{proof}

\begin{lemma}
\label{hb-lemma-L50}
 Let $u \in \mathcal{A} (\Omega)$ satisfy
\begin{equation}
\label{gsz-L150}
\begin{cases}
\sigma_1 \leq f (\kappa [u]) \leq \sigma_2, & \;\; \mbox{in $\Omega$} \\
\;\;\;\;\;\; u \geq \epsilon, & \;\; \mbox{in $\Omega$} \\
\;\;\;\;\;\; u = \epsilon, & \;\;  \mbox{on $\partial \Omega$}
\end{cases}
\end{equation}
where $0 < \sigma_1 \leq \sigma_2 < 1$,  $\epsilon \geq 0$ and $\partial
\Omega \in C^2$. Suppose $f$ satisfies (\ref{gs5-I20}),
(\ref{gsz-I45}) and (\ref{gsz-I40}). Then, for $\epsilon$
sufficiently small,
\begin{equation}
\label{gsz-L160}
 \frac{1}{\sqrt{1 + |Du|^2}} \geq \sigma_1 - C \epsilon (\epsilon + \sqrt{1 - \sigma_1^2})
\;\; \mbox{in $\ol{\Omega}$}.
\end{equation}
\end{lemma}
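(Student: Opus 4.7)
The plan is to combine the sharp boundary estimate of Lemma~\ref{hb-lemma-L10}(ii) with the global gradient estimate of Lemma~\ref{hb-lemma-L40}, propagating the boundary bound on $\nu^{n+1}$ into the interior.

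First, I would apply Lemma~\ref{hb-lemma-L10}(ii) to the hypersurface $\Sigma = \mathrm{graph}(u)$: since $\kappa[u]\in K_n^+$, $\sigma_1\le f(\kappa[u])\le\sigma_2$, and $\partial\Sigma\subset\{x_{n+1}=\epsilon\}$, the conclusion there gives on $\partial\Omega$
\[
\nu^{n+1}\big|_{\partial\Omega} \;>\; \sigma_1 \;-\; \frac{\epsilon\sqrt{1-\sigma_1^2}}{r_1} \;-\; \frac{\epsilon^2(1+\sigma_1)}{r_1^2},
\]
which is \eqref{gsz-L160} restricted to $\partial\Omega$; equivalently, $\max_{\partial\Omega}\sqrt{1+|Du|^2}\le [\sigma_1 - C_0\epsilon(\sqrt{1-\sigma_1^2}+\epsilon)]^{-1}$ for some $C_0=C_0(r_1)$.

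Next, since $u\in\mathcal{A}(\Omega)$ and $u=\epsilon$ on $\partial\Omega$, Lemma~\ref{hb-lemma-L40} gives
\[
e^{u(x)}\sqrt{1+|Du(x)|^2}\;\le\; \max\!\Bigl\{\,\sup_{\Omega}e^u,\; e^{\epsilon}\max_{\partial\Omega}\sqrt{1+|Du|^2}\,\Bigr\}\quad\text{in }\Omega.
\]
Dividing by $e^{u(x)}\ge e^{\epsilon}$ (using the hypothesis $u\ge\epsilon$) and combining with the boundary bound from Step~1 and with $\sup_\Omega u-\epsilon\le \tfrac{L}{2}\sqrt{(1-\sigma_1)/(1+\sigma_1)}$ from Lemma~\ref{hb-lemma-L10}(i), one checks that for $\epsilon$ sufficiently small the boundary term dominates, giving
\[
\sqrt{1+|Du(x)|^2}\;\le\;\bigl[\sigma_1 - C\epsilon(\sqrt{1-\sigma_1^2}+\epsilon)\bigr]^{-1},
\]
and inverting yields \eqref{gsz-L160}.

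The main obstacle is ensuring that the boundary term dominates the $\sup_{\Omega}e^u$ term in the maximum above, since a priori $\sup_{\Omega}e^u$ need not approach $e^{\epsilon}$ as $\epsilon\to 0$. The resolution is that if the maximum of $e^u\sqrt{1+|Du|^2}$ is attained at an interior point $x_0$, then by the proof of Lemma~\ref{hb-lemma-L40} one has $Du(x_0)=0$, so $\sqrt{1+|Du(x_0)|^2}=1$ and the maximal value is $e^{u(x_0)}$. At such a critical point, admissibility (positivity of $\{\delta_{ij}+u u_{ij}\}$) combined with the second-order condition $D^2(e^u\sqrt{1+|Du|^2})(x_0)\le 0$ forces the Hessian eigenvalues $\mu_i$ of $u$ to lie in $(-1/u(x_0),0]$; the inequality $f(1+u(x_0)\mu_1,\dots,1+u(x_0)\mu_n)\ge\sigma_1$ together with concavity and normalization of $f$ (i.e.\ \eqref{gsz-I200}) then yields $\sum\mu_i\ge n(\sigma_1-1)/u(x_0)$, which together with Lemma~\ref{hb-lemma-L10}(i) restricts $u(x_0)$ enough to make the boundary term dominate once $\epsilon$ is small. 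The constant $C$ in \eqref{gsz-L160} will depend on $r_1$, $L$, and $\sigma_1$ through this comparison.
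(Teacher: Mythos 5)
Your first two steps coincide with the paper's: the boundary bound on $\nu^{n+1}$ from Lemma~\ref{hb-lemma-L10}(ii) and the maximum principle of Lemma~\ref{hb-lemma-L40}, and you correctly isolate the real difficulty, namely that the term $\sup_\Omega e^u$ may beat the boundary term. But your proposed resolution of that difficulty does not work. At an interior maximum $x_0$ of $e^u\sqrt{1+|Du|^2}$ one indeed gets $Du(x_0)=0$ and, from $D^2\bigl(e^u\sqrt{1+|Du|^2}\bigr)(x_0)\le 0$ together with admissibility, that the eigenvalues $\mu_i$ of $D^2u(x_0)$ lie in $(-1/u(x_0),0]$, and from (\ref{gsz-I200}) that $\sum\mu_i\ge n(\sigma_1-1)/u(x_0)$. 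These are constraints on the Hessian only; they place no restriction whatsoever on the value $u(x_0)$ (they are satisfied, for instance, with all $\mu_i$ arbitrarily close to $0$ and $u(x_0)$ arbitrarily large). A concrete counterexample to the dominance you need is an equidistance sphere: for the cap $S=\partial B_R(a',-\sigma R)\cap\bfH^{n+1}$ over a large ball, $f(\kappa)\equiv\sigma$, yet at the top point $u(x_0)\approx(1-\sigma)R$ is huge, so $\max(e^u w)=e^{u(x_0)}$ far exceeds $e^\epsilon\max_{\partial\Omega}w$. In that situation your argument only yields $w(x)\le e^{u(x_0)-u(x)}$, which is far weaker than \eqref{gsz-L160} near $\partial\Omega$; so for large domains the proof collapses exactly at the step you flagged.

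The paper's fix is global, not local: it exploits the invariance of the curvature condition under the hyperbolic isometry $X\mapsto\lambda^{-1}X$. One first notes from Lemma~\ref{hb-lemma-L10}(ii) (the \emph{upper} bound on $\nu^{n+1}$, which your sketch never uses) that $\sqrt{1+|Du|^2}\ge\frac{2}{1+\sigma_2}$ on $\partial\Omega$ for small $\epsilon$, a bound invariant under rescaling since $Du^{\lambda}(x)=Du(\lambda x)$ for $u^\lambda(x)=\lambda^{-1}u(\lambda x)$. Choosing $\lambda$ large so that $\frac{L}{2\lambda}\sqrt{\frac{1-\sigma_1}{1+\sigma_1}}\le\ln\frac{2}{1+\sigma_2}$, the height estimate of Lemma~\ref{hb-lemma-L10}(i) applied to $u^\lambda$ on $\Omega/\lambda$ gives $u^\lambda-\epsilon/\lambda\le\max_{\partial(\Omega/\lambda)}\ln\sqrt{1+|Du^\lambda|^2}$, i.e.\ precisely the dominance of the boundary term in Lemma~\ref{hb-lemma-L40} for $u^\lambda$. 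Then $w^\lambda\le\max_{\partial}w^\lambda$, and the lower bound of Lemma~\ref{hb-lemma-L10}(ii) (whose error terms $\epsilon/r_1$ are also scale invariant) gives $1/w^\lambda\ge\sigma_1-C\epsilon(\epsilon+\sqrt{1-\sigma_1^2})$; since the gradient is unchanged by the scaling, \eqref{gsz-L160} follows for $u$ itself. If you want to salvage your write-up, replace the critical-point analysis by this rescaling argument; without it, the dependence of $\sup_\Omega u$ on the diameter $L$ cannot be removed and the boundary term need not dominate.
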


\begin{proof}
By Lemma~\ref{hb-lemma-L10} we have, for $\epsilon$ sufficiently
small,
\begin{equation}
\label{gsz-L170}
 \sqrt{1 + |Du|^2} \geq \frac{2}{1 + \sigma_2}
 \;\; \mbox{on $\partial \Omega$}.
 \end{equation}
Fix $\lambda > 0$ (sufficiently large) such that
\begin{equation}
\label{gsz-L180} \frac{L}{2 \lambda} \sqrt{\frac{1-\sigma_1}{1+\sigma_1}} \leq
   \ln  \frac{2}{1 + \sigma_2}
\end{equation}
 where $L$ is the diameter of $\Omega$,  and let
\[ u^{\lambda} (x) = \frac{1}{\lambda} u (\lambda x),
 \;\; x \in \Omega^{\lambda} \]
 where $\Omega^{\lambda} = \frac{\Omega}{\lambda}$.
 %\{x \in \bfR^n: \lambda x \in  \Omega\}$.
 Then $\kappa [u^{\lambda}] (x) = \kappa [u] (\lambda x)$ in
 $\Omega^{\lambda}$.
  It follow that $u^{\lambda} \in \mathcal{A} (\Omega^{\lambda})$ and
 \begin{equation}
\label{gsz-L150'}
\begin{cases}
\sigma_1 \leq f (\kappa [u^{\lambda}]) \leq \sigma_2,& \;\; \mbox{in $\Omega^{\lambda}$} \\
\;\;\;\;\;\; u^{\lambda} \geq \frac{\epsilon}{\lambda}, & \;\; \mbox{in $\Omega^{\lambda}$} \\
\;\;\;\;\;\; u^{\lambda} = \frac{\epsilon}{\lambda}, & \;\;
\mbox{on $\partial \Omega^{\lambda}$}
\end{cases}
\end{equation}
Applying Lemma~\ref{hb-lemma-L10} to $u^{\lambda}$, we have by
(\ref{gsz-L170}) and (\ref{gsz-L180}),
\[ u^{\lambda} - \frac{\epsilon}{\lambda} \leq
\frac{L}{2 \lambda} \sqrt{\frac{1-\sigma_1}{1+\sigma_1}} \leq \max_{\partial
\Omega^{\lambda}} \ln \sqrt{1 + |D u^{\lambda}|^2}
 \;\; \mbox{in $\Omega^{\lambda}$}\]
 or
 \begin{equation} \label{gsz-L155}
\sup_{\Omega^{\lambda}} e^{u^{\lambda}} \leq \max_{\partial \Omega^{\lambda}}
e^{\frac{\varepsilon}{\lambda}} \sqrt{1+|Du^{\lambda}|^2}.
\end{equation}

By (\ref{gsz-L155}), Lemma~\ref{hb-lemma-L40} and Lemma~\ref{hb-lemma-L10} (part ii, formula (\ref{hb-P110})),
\begin{equation}
 \label{gsz-L190}
 \begin{aligned}
 %\frac{1}{\sqrt{1 + |D u|^2}}
  & \frac{1}{\sqrt{1 + |D u^{\lambda}|^2}}
    \geq e^{(u^{\lambda} - \frac{\epsilon}{\lambda})}
         \min_{\partial \Omega^{\lambda}} \frac1{\sqrt{1 + |Du^{\lambda}|^2}} \\
   & \geq \min_{\partial \Omega^{\lambda}}
          \frac{1}{\sqrt{1 + |Du^{\lambda}|^2}}
   \geq \sigma_1 - C \epsilon (\epsilon + \sqrt{1 - \sigma_1^2}).
 \end{aligned}
\end{equation}
This proves (\ref{gsz-L160}).
\end{proof}

\bigskip

\section{Boundary estimates for second derivatives}
\label{gsz-B}

Let $\Omega$ be a bounded smooth domain in $\bfR^n$.
In this section we establish boundary estimates for second derivatives
of admissible solutions to the Dirichlet problem
\begin{equation}
\label{gsz-I70'}
\left\{ \begin{aligned}
G (D^2 u, Du, u) & = \psi (x, u), & \;\; \mbox{in $\Omega$} \\
u & = \epsilon,  & \;\;  \mbox{on $\partial \Omega$}
\end{aligned} \right.
\end{equation}
where $G$ is defined in (\ref{gs5-F60}) and $\psi \in C^{\infty}
(\ol{\Omega} \times \bfR_+)$. We assume that $\psi$ satisfies the
following conditions:
\begin{equation}
\label{gsz-B20}
 0 < \psi (x, z) \leq \frac{1-\epsilon_1}{z},
 \;\; |D_x \psi (x, z)| + |\psi_z (x, z)| \leq \frac{C}{z^2},
\;\; \forall \; x \in \ol{\Omega_{\delta}}, \; z \in (0, \epsilon_1)
\end{equation}
\begin{equation}
\label{gsz-B30}
\psi (x, z) = \frac{\sigma}{z},
\;\; \forall \; x \in \partial \Omega, \; z \in (0, \epsilon_1).
\end{equation}
where $C$ is a large fixed constant, $\epsilon_1 > 0$ is  a small fixed constant,
$\delta =\frac{\epsilon}{C^2}$ and 
\[ \Omega_{\delta}= \{x\in \Omega: d(x,\partial \Omega)<\delta\}. \]

\begin{remark}
\label{rem1}
 We have in mind $\Psi:=u\psi(x,u)=\sigma+M(u-v(x))$
where $M\in [0,\frac1{\epsilon}]$
and $v=\epsilon~\mbox{on $\partial \Omega$},~|\nabla v|\leq C~\mbox{in $\Omega$}$. We will need
this generality because (see (\ref{gsz-185})) $\tilde{G}_u=G_u-\psi_u$ may be positive in $\Omega$
causing us some trouble when we try to prove Theorem \ref{gsz-th30}.
Note also that  conditions (\ref{gsz-B20}), (\ref{gsz-B30})  imply $\mbox{osc}_{\Omega_{\delta}} \Psi \leq \frac{C}{\epsilon} \delta \leq \frac1C$ which is used
at the end of the proof when we need to appeal to  Lemma~\ref{hb-lemma-L50}.
\end{remark}

\begin{theorem}
\label{gsz-th-B10}
Let $\Omega$ be a bounded domain in $\bfR^n$, $\partial \Omega \in C^3$, and
$u \in C^3 (\bar{\Omega}) \cap \mathcal{A} (\Omega)$ a solution
of problem (\ref{gsz-I70'}). %such that $u^2 + |x|^2$ is convex.
Suppose that $f$ satisfies (\ref{gs5-I20})-(\ref{gs5-I45}) and
$\psi$ satisfies (\ref{gsz-B20}) and (\ref{gsz-B30}).
Then, if $\epsilon$ is sufficiently small,
\begin{equation}
\label{hb-I25'}
u|D^2 u| \leq C
\;\;\; \mbox{on $\partial \Omega$}
\end{equation}
where $C$ is independent of
$\epsilon$.
\end{theorem}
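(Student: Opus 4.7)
My plan is the classical three-step boundary estimate: at a fixed point $x_0 \in \partial\Omega$ I bound separately the tangential-tangential, mixed tangential-normal, and pure normal second derivatives of $u$. Fix coordinates so $x_0 = 0$ and the $n$-th basis vector is the inner normal to $\partial\Omega$ at $x_0$, representing $\partial\Omega$ locally as $\{x_n = \rho(x')\}$ with $\rho(0) = 0$ and $D\rho(0) = 0$. The tangential-tangential estimate follows immediately by differentiating the boundary identity $u(x',\rho(x')) \equiv \epsilon$ twice: one finds $u_{\alpha\beta}(0) = -u_n(0)\rho_{\alpha\beta}(0)$ for $\alpha,\beta < n$. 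By (\ref{gsz-B30}) the equation reduces to $F(A^{\vv}[u]) = \sigma$ on $\partial\Omega$, and Remark~\ref{rem1} says $\mathrm{osc}_{\Omega_\delta}\Psi$ is small, so Lemma~\ref{hb-lemma-L50} applies and $|Du|$ is uniformly bounded, giving $u|u_{\alpha\beta}(0)| \leq C\epsilon \leq C$.

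For the mixed tangential-normal estimate, consider the vector field $T = \partial_\alpha + \rho_\alpha(x')\partial_n$, which is tangent to $\partial\Omega$ so that $Tu$ vanishes identically on $\partial\Omega$ near $x_0$. Differentiating $G(D^2u, Du, u) = \psi(x,u)$ in the direction $T$ and using the explicit forms of $G^{st}$, $G_{p_k}$ and $G_u$ from (\ref{gs5-F70}), I obtain a linear equation $\mathcal{L}(Tu) = h$ with controlled right-hand side $h$. I then construct a local barrier
\[
\Phi = A(u - \epsilon) + B|x - x_0|^2
\]
on the half-ball $\Omega \cap B_r(x_0)$, with $A$ (depending on $\epsilon$) and $B$ chosen so that $\mathcal{L}(\Phi \pm Tu) \leq 0$ inside and $\Phi \pm Tu \geq 0$ on its boundary. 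The comparison principle yields $|Tu| \leq \Phi$, and differentiating in the normal direction at $x_0$ (where $Tu = \Phi = 0$) produces $u|u_{\alpha n}(0)| \leq C$.

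For the pure normal estimate I argue by contradiction using (\ref{gs5-I45}). At $x_0$ the tangential $(n-1)\times(n-1)$ block of $A^{\vv}[u](0)$ is a small perturbation of $\sigma I_{n-1}$: from (\ref{gsz-F20}) and the fact that $u_\alpha(0)=-\rho_\alpha(0)u_n(0)=0$ (so $\gamma^{\alpha n}(0)=0$ for $\alpha<n$), one gets $a^{\vv}_{\alpha\beta}(0) = \frac{1}{w}(\delta_{\alpha\beta} + u\,u_{\alpha\beta}) \approx \sigma\delta_{\alpha\beta}$, since $1/w \to \sigma$ by Lemma~\ref{hb-lemma-L10}(ii) and $u\,u_{\alpha\beta} = O(\epsilon)$ by Step~1; the mixed entries $a^{\vv}_{\alpha n}$ are bounded by Step~2. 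If $u(0)|u_{nn}(0)| \to \infty$, then by Cauchy interlacing the largest eigenvalue $\lambda_n$ of $A^{\vv}[u](0)$ would be arbitrarily large while the remaining $n-1$ eigenvalues cluster around $\sigma$. Writing $\mu = \lambda/\sigma$ we have $f(\mu) = f(\lambda)/\sigma = 1$ by homogeneity, yet $(\mu_1,\dots,\mu_{n-1})$ lies in $B_{\delta_0}(\mathbf{1}')$ while $\mu_n \to \infty$, so (\ref{gs5-I45}) forces $f(\mu) \geq 1+\varepsilon_0/2$, contradicting $f(\mu)=1$.

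The principal obstacle is Step~2. The $1/u$ singularity in $G$ infects both $G^{ij}$ (which carries a factor $1/u$) and $G_u$ (which carries a factor $1/u^2$), so the barrier constants $A,B$ must scale with $\epsilon$ in just the right way. The zeroth-order coefficient $G_u - \psi_u$ in $\mathcal{L}$ may have bad sign under the hypotheses of Remark~\ref{rem1}, and absorbing it requires the trace identity $\sum F^{ii} \geq 1$ from (\ref{gsz-I210}) together with the precise formulas (\ref{gs5-F70}); this is the exact sense in which the structure of the linearization, rather than a generic uniform-ellipticity bound, is essential. Step~3 is conceptually new (it is where (\ref{gs5-I45}) enters the proof) but is straightforward once Steps~1--2 are in hand.
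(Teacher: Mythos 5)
Your Steps 1 and 3 track the paper closely (Step 3 is essentially the paper's argument, though plain Cauchy interlacing only bounds the lower $n-1$ eigenvalues of $A^{\vv}[u](0)$ from above by those of the tangential block; to conclude they \emph{cluster} near $\sigma$ you need the quantitative perturbation result, Lemma 1.2 of \cite{CNS3}, which uses the boundedness of the mixed entries from Step 2 --- a minor, fixable point). The genuine gap is in Step 2, which is the heart of the theorem. First, your tangential field $T=\partial_\alpha+\rho_\alpha(x')\partial_n$ is not an (approximate) symmetry of the equation: differentiating $G(D^2u,Du,u)=\psi$ along a variable-coefficient field produces, besides $\mathcal{L}(Tu)$, terms of the form $2G^{st}\rho_{\alpha s}u_{nt}$ (and similar), i.e.\ second derivatives of $u$ contracted against $G^{st}$, which are exactly the quantities you are trying to estimate and are not controlled by $1+\sum F^{ii}$; so the claim ``$\mathcal{L}(Tu)=h$ with controlled right-hand side'' is unjustified and in general false. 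The paper avoids this by taking $T=\partial_\alpha+\sum_{\beta<n}B_{\alpha\beta}(x_\beta\partial_n-x_n\partial_\beta)$, built from the horizontal translations and rotations that are symmetries of the hyperbolic problem; Lemma~\ref{gsz-lemma-B30} then gives $|L(Tu)|\leq \frac{C}{u^2}\big(1+\sum F^{ii}\big)$, at the price that $Tu$ vanishes on $\partial\Omega$ only to second order, which is what the $B|x|^2$ term in the barrier is there to absorb.

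Second, your barrier $\Phi=A(u-\epsilon)+B|x-x_0|^2$ has no mechanism to produce the needed negativity. From \eqref{gsz-E110}, $Lu$ contains $+\frac{2}{w^3u}F^{ij}u_iu_j-\frac{1}{wu}\sum F^{ii}$, and near $\partial\Omega$ one has $w\approx 1/\sigma$, so $|Du|^2>1$ whenever $\sigma^2<1/2$ and $A\,Lu$ can be \emph{positive} of size $\sum F^{ii}/u$; thus $L(\Phi\pm Tu)\leq 0$ cannot simply be arranged by choosing $A,B$. The paper's barrier uses $A\big(1-\frac{\epsilon}{u}\big)$ precisely because the extra term $\frac{2}{u^3}G^{st}u_su_t$ in $L(1/u)$ flips the bad term's sign, yielding (Lemma~\ref{gsz-B10}) $L\big(1-\frac{\epsilon}{u}\big)\leq -\frac{\epsilon(1-\frac{u\psi}{w})}{2wu^3}\big(1+\sum F^{ii}\big)$, and it works on the $\epsilon$-scale neighborhood $\Omega\cap B_\delta$, $\delta=\epsilon/C^2$, where $u\sim\epsilon$ so that $\epsilon/u^3$ dominates the error $C/u^2$ from $L(Tu)$ and from $B|x|^2$. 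Your proposal fixes neither the special form of the first barrier term nor the $\epsilon$-scaling of the half-ball, and with $A(u-\epsilon)$ in place of $A(1-\epsilon/u)$ the comparison argument breaks down.
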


The notation of this section follows that of subsection~\ref{gsz-FV}.
We first consider the partial linearized operator of $G$ at $u$:
\[ L = G^{st} \partial_s \partial_t + G^s \partial_s \]
where $G^{st}$ is defined in (\ref{gs5-F70}) and
\begin{equation}
G^s := \frac{\partial G}{\partial u_s}
    = - \frac{u_s}{w^2 u} F^{ij} a_{ij}
      - \frac{2}{w u} %\sum_{i,j,k}
       F^{ij} a_{ik} \Big(\frac{w u_k \gamma^{sj} + u_j \gamma^{ks}}{1+w}\Big)
      +\frac{2}{w^2 u} F^{ij} u_i \gamma^{sj}
\end{equation}
by the formula (2.21) in \cite{GS04}, where $F^{ij} = F^{ij} (A^{\vv} [u])$
and $a_{ij} = a^{\vv}_{ij} [u]$.
Since $\{F^{ij}\}$ and $\{a_{ij}\}$ are both positive definite and can be
diagonalized simultaneously, we see that
\begin{equation}
\label{gsz-E85}
  F^{ij} a_{ik} \xi_k \xi_j \geq 0, \;\; \forall \; \xi \in \bfR^n.
\end{equation}
Moreover, by the concavity of $f$ we have the following inequality
similar to  Lemma~2.3 in \cite{GS04}
\begin{equation}
\label{gs5-D80}
\sum |G^s| \leq \frac{C}{u} \Big(1 + \sum F^{ii}\Big).
\end{equation}

Since $\gamma^{sj} u_s = u_j/w$,
\begin{equation} \label{gsz-E105}
G^s u_s = \frac1u \Big\{\Big(\frac{1}{w^2} - 1 \Big) F^{ij} a_{ij} %\sum_i f_i \kappa_i
          - \frac{2}{w^2} F^{ij} a_{ik} u_k u_j + \frac{2}{w^3} F^{ij} u_i u_j \Big\}.
\end{equation}
It follows from (\ref{gs5-F70}) and (\ref{gsz-E105}) that
\begin{equation}
\label{gsz-E110}
%\begin{aligned}
L u = \frac{1}{w^2 u} F^{ij} a_{ij} - \frac{1}{w u} \sum F^{ii}
         - \frac{2}{w^2 u} F^{ij} a_{ik} u_k u_j
         + \frac{2}{w^3 u} F^{ij} u_i u_j
\end{equation}

\begin{lemma}
\label{gsz-B10}
Suppose that $f$ satisfies (\ref{gs5-I20}), (\ref{gs5-I30}), (\ref{gsz-I45})
and (\ref{gsz-I40}).
Then
\begin{equation}
\label{gsz-B50}
 L \Big(1 - \frac{\epsilon}{u}\Big)
   \leq - \frac{\epsilon(1 - \frac{u\psi}w)}{2w u^3} \Big(1 + \sum F^{ii}\Big)
\;\; \mbox{in $\Omega$}.
\end{equation}
\end{lemma}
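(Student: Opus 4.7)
The plan is a direct computation. Writing $\phi = 1 - \epsilon/u$, we have $\phi_s = \epsilon u_s/u^2$ and $\phi_{st} = \epsilon(u_{st}/u^2 - 2 u_s u_t/u^3)$, so
\[
 L\phi \;=\; \frac{\epsilon}{u^2}\bigl(G^{st} u_{st} + G^s u_s\bigr) \;-\; \frac{2\epsilon}{u^3}\,G^{st} u_s u_t
 \;=\; \frac{\epsilon}{u^2}\,Lu \;-\; \frac{2\epsilon}{u^3}\,G^{st} u_s u_t.
\]
For the last term, I would use the identity $\gamma^{is} u_s = u_i/w$ (which follows from $|Du|^2 = w^2-1$ and a short computation) together with the definition $G^{st} = \tfrac1w F^{ij}\gamma^{is}\gamma^{tj}$ to get $G^{st} u_s u_t = F^{ij} u_i u_j /w^3$.

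Substituting the formula \eqref{gsz-E110} for $Lu$, the $+2F^{ij}u_i u_j/w^3$ term inside $Lu$ is exactly cancelled by the $-\tfrac{2\epsilon}{u^3}G^{st}u_s u_t$ term, leaving
\[
 L\phi \;=\; \frac{\epsilon}{u^3}\Bigl[\,\frac{F^{ij} a_{ij}}{w^2} \;-\; \frac{\sum F^{ii}}{w} \;-\; \frac{2 F^{ij} a_{ik} u_k u_j}{w^2}\,\Bigr].
\]
Now two facts collapse the right side. First, since $f$ is homogeneous of degree one, Euler's identity gives $F^{ij}a_{ij} = F(A^{\vv}[u]) = u\,G(D^2u,Du,u) = u\psi$. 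Second, by \eqref{gsz-E85} the term $F^{ij} a_{ik} u_k u_j$ is nonnegative, so it can be dropped to obtain
\[
 L\phi \;\le\; \frac{\epsilon}{w u^3}\,\Bigl(\frac{u\psi}{w} - \sum F^{ii}\Bigr).
\]

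Finally, to match the stated bound I would set $A = u\psi/w$ and $B = \sum F^{ii}$ and check the elementary algebraic inequality
\[
 A - B \;\le\; -\tfrac{1}{2}(1-A)(1+B),
\]
which is equivalent to $(1+A)(1-B) \le 0$ after expansion. Since $\psi > 0$ gives $1+A > 0$, and condition \eqref{gsz-I210} yields $B = \sum F^{ii} \ge 1$, this holds. The only mildly delicate point is the cancellation of the $F^{ij}u_i u_j/w^3$ terms and the recognition that the leftover mess factors as $(1+A)(1-B)$; everything else is bookkeeping using identities already recorded in Sections \ref{gsz-I} and \ref{gsz-F}.
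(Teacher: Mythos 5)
Your proof is correct and follows essentially the same route as the paper: expand $L(1-\epsilon/u)$ via \eqref{gsz-E110}, cancel the $F^{ij}u_iu_j/w^3$ terms, drop the nonnegative term using \eqref{gsz-E85}, identify $F^{ij}a_{ij}=u\psi$ by homogeneity and the equation, and conclude from $\sum F^{ii}\ge 1$ (\eqref{gsz-I210}) together with $\psi>0$. The only difference is that you spell out the final algebraic equivalence $(1+A)(1-B)\le 0$, which the paper leaves implicit when it says the bound "follows from \eqref{gsz-I210} and \eqref{gsz-B20}".
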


\begin{proof}
By (\ref{gsz-E110}), (\ref{gsz-E85}) and (\ref{gsz-I40}),
\begin{equation}
\label{gsz-E120}
\begin{aligned}
L \frac{1}{u} &   =  - \frac{1}{u^2} L u + \frac{2}{u^3} G^{st} u_s u_t \\
              &   =  - \frac{1}{u^2} L u + \frac{2}{w^3 u^3} F^{ij} u_i u_j \\
          %   & \geq \frac{1}{w u^3} \sum_i F^{ii} - \frac{\psi}{w^2 u^3} \\
              & \geq \frac{1}{w u^3} \Big(\sum F^{ii} - \frac{u\psi}{w}\Big).
\end{aligned}
\end{equation}
Thus (\ref{gsz-B50}) follows from (\ref{gsz-I210}) and (\ref{gsz-B20}).
\end{proof}

\ \begin{lemma}
\label{gsz-lemma-B30} For
 $1 \leq i, j \leq n$,
\begin{equation}
\label{gsz-B40}
 L (x_i u_j - x_j u_i) = (\psi_u - G_u) (x_i u_j - x_j  u_i)
                         + x_i \psi_{x_j} - x_j \psi_{x_i}
\end{equation}
where
\begin{equation}
\label{gsz-B130}
G_u := \frac{\partial G}{\partial u}
%     = \frac{1}{u}  F^{ij} \Big(a_{ij} - \frac{\delta_{ij}}{w}\Big)
     = - \frac{1}{w u^2} \sum_i F^{ii}.
\end{equation}
\end{lemma}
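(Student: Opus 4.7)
The operator $G$ of \eqref{gs5-F60}, $G(D^2u,Du,u) = u^{-1}F(A^{\vv}[u])$, has no explicit $x$-dependence, and since $F=f\circ\lambda$ is a symmetric function of the eigenvalues of $A^{\vv}$, it is invariant under rotations of $\bfR^n$ fixing the $x_{n+1}$-axis. The plan is to exploit this symmetry to derive \eqref{gsz-B40} in one stroke, rather than grinding out $L(x_iu_j - x_ju_i)$ by the product rule and then hunting for cancellations.

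Let $R_\theta$ denote rotation by $\theta$ in the $(x_i,x_j)$-plane and set $u_\theta(x) := u(R_\theta x)$. Since $Du_\theta(x) = R_\theta^{\!\top}Du(R_\theta x)$ and $D^2 u_\theta(x) = R_\theta^{\!\top}(D^2u)(R_\theta x)R_\theta$, formula \eqref{gsz-F20} together with the definition of $\gamma^{ij}$ shows that $A^{\vv}[u_\theta](x) = R_\theta^{\!\top} A^{\vv}[u](R_\theta x) R_\theta$, hence $F(A^{\vv}[u_\theta](x)) = F(A^{\vv}[u](R_\theta x))$, and therefore
\[
G(D^2u_\theta, Du_\theta, u_\theta)(x) \;=\; G(D^2u,Du,u)(R_\theta x) \;=\; \psi(R_\theta x, u_\theta(x)).
\]
I would now differentiate in $\theta$ at $\theta = 0$. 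Using $\frac{d}{d\theta}\big|_0 (R_\theta x)_l = x_i\delta_{lj} - x_j\delta_{li}$, one gets $\frac{d}{d\theta}\big|_0 u_\theta = x_i u_j - x_j u_i$, and since $\partial_\theta$ commutes with $\partial_s$ also $\frac{d}{d\theta}\big|_0 \partial_s u_\theta = \partial_s(x_iu_j - x_ju_i)$ and similarly for $\partial_s\partial_t u_\theta$. The chain rule on the left produces $L(x_iu_j - x_ju_i) + G_u (x_iu_j - x_ju_i)$; on the right it produces $\psi_u(x_iu_j - x_ju_i) + x_i\psi_{x_j} - x_j\psi_{x_i}$. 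Rearranging gives \eqref{gsz-B40}. The formula $G_u = -w^{-1}u^{-2}\sum F^{ii}$ is already recorded in \eqref{gs5-F70} and follows from the homogeneity of $F$.

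The only step that demands any care is the invariance identity $G(D^2u_\theta,Du_\theta,u_\theta)(x) = G(D^2u,Du,u)(R_\theta x)$, which reduces to checking that $A^{\vv}$ transforms by orthogonal conjugation under rotations of the base coordinates; this is immediate from $\gamma^{ij}$ and \eqref{gsz-F20}. If a purely computational proof were preferred, one could instead differentiate the equation in $x_k$ to get $Lu_k = \psi_{x_k} + (\psi_u - G_u) u_k$, apply $L$ to $x_iu_j - x_ju_i$ by the product rule, and note that the extra first-order remainder $2(G^{it}u_{jt} - G^{jt}u_{it}) + G^iu_j - G^ju_i$ must vanish; this vanishing is precisely the infinitesimal version of the rotation invariance used above and can be verified directly from the explicit formulas for $G^{st}$ and $G^s$ using $\gamma^{ij}u_j = u_i/w$ and $F^{\alpha\beta}=F^{\beta\alpha}$. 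Either route encounters no substantive obstacle.
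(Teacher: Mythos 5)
Your proposal is correct and is essentially the paper's own argument: the authors likewise compose $u$ with a rotation in the $(x_i,x_j)$-plane, use the rotational invariance of $G$ (which has no explicit $x$-dependence), and differentiate the rotated equation in $\theta$ at $\theta=0$ to obtain \eqref{gsz-B40}. Your explicit verification that $A^{\vv}$ conjugates by the rotation, and the remark that \eqref{gsz-B130} follows from \eqref{gs5-F70} via homogeneity, are just details the paper leaves implicit.
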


\begin{proof}
For $\theta \in \mathbb{R}$ let
\[ \begin{aligned}
   y_i  & = x_i \cos \theta - x_j \sin \theta \\
   y_j  & = x_i \sin \theta + x_j \cos \theta \\
   y_k  & = x_k, \;\; \forall \; k \neq i, j.
\end{aligned} \]
Differentiate the equation
\[ G(D^2 u(y), D u(y), u(y)) =  \psi (y, u (y)) \]
with respect to $\theta$ and set $\theta = 0$ afterwards. We obtain
\[ \begin{aligned}
  L (x_i u_j - x_j u_i) & + G_u (x_i u_j - x_j  u_i) \\
    & = (L + G_u) \frac{\partial u}{\partial\theta}\Big|_{\theta = 0}
     = \frac{\partial}{\partial\theta} \psi (y, u (y)) \Big|_{\theta = 0}
\end{aligned} \]
which yields (\ref{gsz-B40}).
\end{proof}

\begin{proof}[Proof of Theorem~\ref{gsz-th-B10}]
Consider an arbitrary point on $\partial \Omega$, which we may
assume to be the origin of $\mathbb{R}^n$ and choose the coordinates
so that the positive $x_n$ axis is the interior normal to
$\partial\Omega$ at the origin. There exists a uniform constant
$r > 0$ such that $\partial \Omega \cap B_r (0)$ can be
represented as a graph
\begin{equation}
\label{gsz-B140}
 x_n = \rho(x') = \frac{1}{2} \sum_{\alpha,\beta<n}
     B_{\alpha\beta} x_\alpha x_\beta + O (|x'|^3), \qquad
  x' = (x_1, \dots, x_{n-1}).
\end{equation}
Since $u = \epsilon$ on $\partial \Omega$, we see that
$u (x', \rho (x')) = \epsilon$ and
\begin{equation}
\label{gsz-B150}
 u_ {\alpha\beta} (0) = - u_n \rho_{\alpha\beta} \qquad \alpha,\beta < n.
\end{equation}
Consequently,
\begin{equation}
\label{gsz-B160}
 |u_{\alpha\beta}(0)| \leq C |D u (0)|, \qquad \alpha,\beta < n
\end{equation}
where $C$ depends only on the maximal (Euclidean principal) curvature of
$\partial \Omega$.

Next, following \cite{CNS1} we consider for fixed $\alpha<n$ the operator
\begin{equation}
\label{gsz-B170}
 T = \partial_\alpha + \sum_{\beta <n}
       B_{\alpha\beta} (x_\beta\partial_n- x_n\partial_\beta).
\end{equation}
We have
\begin{equation} \label{gsz-B176}
\begin{aligned}
   |T u| \leq C, \;\; & \mbox{in $\Omega \cap B_{\delta} (0)$} \\
   |T u| \leq C |x|^2,  \;\; & \mbox{on $\partial \Omega \cap B_{\delta}$}
   \end{aligned}
   \end{equation}
since $u = \epsilon$ on $\partial \Omega$.
By Lemma~\ref{gsz-lemma-B30} and (\ref{gsz-B20}), (\ref{gs5-D80}),
\begin{equation}
\label{gsz-B180}
\begin{aligned}
 |L (Tu)| &    = |T G (D^2 u, Du, u) - G_u Tu| \\
          &    = |T \psi (x, u) - G_u Tu| \\
          & \leq \frac{C}{u^2} \Big(1 + \sum F^{ii}\Big).\end{aligned}
\end{equation}
Let
\[ \phi = A \Big(1 - \frac{\epsilon}{u}\Big) + B |x|^2 \pm Tu. \]
By (\ref{gsz-B20}),  (\ref{gs5-D80}), (\ref{gsz-B180}) and Lemma~\ref{gsz-B10},
\begin{equation}
\label{gsz-B185}
L \phi \leq \Big(- \frac{\epsilon_1 \epsilon A}{2 w}+ C u^2 B(u+\delta)+C u\Big)\frac{(1+\sum F^{ii})}{u^3}
%L\phi \leq ( -\frac{(1-\sigma)}4 A+Cu^2 B( u+\delta)+Cu )\frac{(1+\sum F^{ii})}{u^3}
\;\;\; \mbox{in $\Omega \cap B_\delta$.}
\end{equation}
We  first choose  $B=\frac{C_1}{\delta^2}$ with $C_1=C$ the constant in (\ref{gsz-B176})
  so that  $\phi \geq 0 ~ \text{on }\partial(\Omega\cap B_\delta)$. Then
choosing $A>>C_1/\epsilon_1$ makes $ L \phi \leq 0 ~ \text{in } \Omega \cap B_\delta$.

By the maximum principle $\phi \geq 0$ in $\Omega \cap B_\delta$.
Since $\phi (0) = 0$, we have $\phi_n (0) \geq 0$ which gives
\begin{equation}
\label{gsz-B200}
 |u_{\alpha n} (0)|\leq \frac{A u_n (0)}{u (0)}.
\end{equation}

Finally to estimate $|u_{nn} (0)|$ we use our hypothesis (\ref{gs5-I45}).
We may assume $[u_{\alpha \beta} (0)]$, $1 \leq \alpha, \beta < n$, to be
diagonal. Note that $u_{\alpha} (0) = 0$ for $\alpha < n$.
We have at $x = 0$
\[ A^{\vv} [u] = \frac{1}{w} \begin{bmatrix}
   1 + u u_{11} &           0  & \dots   & \frac{u u_{1n}}{w} \\
            0   & 1 + u u_{22} & \dots   & \frac{u u_{2n}}{w} \\
   \vdots       &   \vdots     & \ddots  & \vdots \\
%      0         &  0                & \dots  & 1 + u u_{n-1,n-1}
\frac{u u_{n1}}{w} & \frac{u u_{n2}}{w} & \dots  & 1 + \frac{u u_{nn}}{w^2}
   \end{bmatrix}. \]

By Lemma 1.2 in \cite{CNS3}, if $\epsilon u_{nn}(0)$ is very
large, the eigenvalues $\lambda_1, \ldots , \lambda_n$ of
$A^{\vv} [u]$ are asymptotically given by
\begin{equation}
\begin{aligned}
 \lambda_{\alpha} = & \frac{1}{w} (1 + \epsilon u_{\alpha \alpha} (0)) + o(1),
\; \alpha < n \\
       \lambda_n  = & \frac{\epsilon u_{nn}(0)}{w^3}
             \Big(1 + O \Big(\frac{1}{\epsilon  u_{nn} (0)}\Big)\Big).
\end{aligned}
\end{equation}
By (\ref{gsz-B160}) and assumptions (\ref{gsz-I40})-(\ref{gs5-I45}),
for all $\epsilon > 0$ sufficiently small,
\[ \epsilon \psi (0, \epsilon) = \frac{1}{w} F (w A^{\vv} [u] (0))
 %    = \frac{1}{w} f(1 + O (\epsilon), , \cdots, + Re_n)
   \geq  \frac{1}{w} \Big(1 + \frac{\varepsilon_0}{2}\Big)    \]
if $\epsilon u_{nn}(0) \geq R$ where $R$ is a uniform constant.
By the hypothesis (\ref{gsz-B30}) and Lemma~\ref{hb-lemma-L50}, however,
\[ \sigma \geq \frac{1}{w} \Big(1 + \frac{\varepsilon_0}{2}\Big)
     \geq (\sigma - C \epsilon)
           \Big(1 + \frac{\varepsilon_0}{2}\Big)
       > \sigma \]
which is a contradiction.
% contradicting $ H \leq \frac{1}{w}$.
Therefore %$|\epsilon u_{nn}(0)|$ is uniformly bounded
\[ |u_{nn} (0)| \leq  \frac{R}{\epsilon} \]
and the proof is complete.
\end{proof}

\bigskip
\section{Global estimates for second derivatives}
\label{gsz-G}
In this section we prove  a maximum principle for the largest hyperbolic principal curvature $\kappa_{\max} (x)$
of solutions of general curvature equations. For later applications we keep track of how the estimates
depend on the right hand side of (\ref{gsz-I70'}) . We consider
\begin{equation}
\label{gsz-G1}
 M(x) = \frac{\kappa_{\max} (x)}{u^2(x)(\nu^{n+1}(x)-a)}.
\end{equation}

\begin{theorem}\label{gsz-th-G9} Let $u \in C^4 (\bar{\Omega})$ be a  positive solution
of $f(\kappa[u])=\Psi(x,u)$ where $f$ satisfies (\ref{gs5-I20})-(\ref{gsz-I40}), $A^{\vv}[u] \in \mathcal{S}_K,\,\nu^{n+1} \geq 2a$ and
 $\Psi \geq \sigma_0 >0$.
Suppose M achieves its maximum at an interior point $x_0 \in \Omega$. Then
either $\kappa_{\max} (x_0) \leq 16(a+\frac1a)$ or
\begin{equation} \label{gsz-G2}
M^2(x_0) \leq C\frac{\{(|\Psi_x|+|\Psi_u|)^2+(|\Psi_{xx}|+|\Psi_{ux}|+|\Psi_{uu}|)\}(x_0)}{u^2(x_0)}
\end{equation}
where C is a controlled constant.
\end{theorem}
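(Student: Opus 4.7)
The plan is the standard maximum principle for curvature functions, applied to $\log M$ at an interior maximum $x_0$. At $x_0$, choose a local orthonormal frame $e_1,\dots,e_n$ for the hyperbolic metric $g$ on $\Sigma$ that diagonalizes the second fundamental form, with $h_{11}=\kappa_1=\kappa_{\max}(x_0)$ and $h_{ii}=\kappa_i$. I may assume $\kappa_1$ is a simple eigenvalue (otherwise perturb, or use the standard Lipschitz trick), so that $\kappa_1$ is smooth near $x_0$. The maximum principle then yields $\nabla_i \log M(x_0)=0$ and $L\log M(x_0)\le 0$, where $L:=F^{ij}\nabla_i\nabla_j$ is the linearization of $F(A^{\vv}[u])$ in the hyperbolic frame. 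Expanding,
\begin{equation*}
\frac{L\kappa_1}{\kappa_1}-\frac{F^{ij}\nabla_i\kappa_1\nabla_j\kappa_1}{\kappa_1^2}
-2\frac{Lu}{u}+2\frac{F^{ij}\nabla_iu\nabla_ju}{u^2}
-\frac{L\nu^{n+1}}{\nu^{n+1}-a}+\frac{F^{ij}\nabla_i\nu^{n+1}\nabla_j\nu^{n+1}}{(\nu^{n+1}-a)^2}\le 0 .
\end{equation*}

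\textbf{Key computations.} For $L\kappa_1$ I would use the Simons-type identity in the ambient space $\bfH^{n+1}$ of constant sectional curvature $-1$: Codazzi gives $\nabla_kh_{ij}$ symmetric in all three indices, and Gauss yields the commutator correction, producing (at $x_0$, after exploiting concavity \eqref{gsz-F50} of $F$ to throw away the third-derivative term coming from differentiating the equation $F(A^{\vv}[u])=\Psi$ twice)
\begin{equation*}
L\kappa_1 \ge \kappa_1^2\sum F^{ii}\kappa_i -\kappa_1 \sum F^{ii}\kappa_i^2 -\kappa_1\sum F^{ii}+\kappa_1\sum f_i -\Psi+\nabla_1\nabla_1\Psi,
\end{equation*}
up to terms controlled by $|\Psi_x|+|\Psi_u|$ and gradient terms that are cancelled against the good $|\nabla\kappa_1|^2$ term via the critical equation. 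For the position and normal terms I would use the fact that, because the hyperbolic scaling $X\mapsto \lambda X$ is an isometry, $u$ and $\nu^{n+1}$ satisfy very clean Jacobi-type equations on $\Sigma$; in particular a direct computation in $\bfH^{n+1}$ gives
\begin{equation*}
Lu = u\,\sum F^{ii}(\nu^{n+1}-\kappa_i)+ u\,(\text{homogeneous terms}),\qquad L\nu^{n+1}=-\nu^{n+1}\sum F^{ii}\kappa_i^2+\nu^{n+1}\sum F^{ii}+(\text{l.o.t.}),
\end{equation*}
so that the dangerous $\sum F^{ii}\kappa_i^2$ terms appear with definite signs that, after multiplication by the right weights, can be combined with the Simons term above.

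\textbf{Role of radial graphs.} The delicate part, and where I would follow the paper's hint to work with radial graphs, is to rewrite the term $Lu/u$ relative to $L\nu^{n+1}/(\nu^{n+1}-a)$. Writing $\Sigma$ as $X=e^v\vz$ with $\vz\in\bfS^n_+$, we have $u=y e^v$ and the vertical component $\nu^{n+1}$ is an affine function of $y$ and $\ve\cdot\nabla v$, which decouples the hyperbolic scaling invariance from the $\ve$-translation symmetry and makes the combination
\begin{equation*}
-2\frac{Lu}{u}-\frac{L\nu^{n+1}}{\nu^{n+1}-a}
\end{equation*}
simplify to an expression of the form $\frac{-a}{\nu^{n+1}-a}\sum F^{ii}\kappa_i^2+\text{(controllable)}$, at which point the $\kappa_1\sum F^{ii}\kappa_i^2$ contribution from the Simons term can be absorbed only if $\nu^{n+1}\ge 2a$, which is exactly the standing hypothesis. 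Without passing to radial coordinates, the same estimate requires much more care because of mixed Euclidean/hyperbolic cross terms.

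\textbf{Conclusion and main obstacle.} Putting these together, the inequality $L\log M(x_0)\le 0$ becomes an inequality of the schematic form
\begin{equation*}
c_1\,\kappa_1\,\sigma_0 \;-\; c_2\,(a+a^{-1})\sum F^{ii}\;-\;\frac{C(|\Psi_x|+|\Psi_u|)^2+(|\Psi_{xx}|+|\Psi_{xu}|+|\Psi_{uu}|)}{u^2\,\kappa_1}\cdot\kappa_1^2 \;\le\;0,
\end{equation*}
after using $\sum F^{ii}\ge 1$ from \eqref{gsz-I210} and $\Psi\ge\sigma_0$. The main obstacle, and the reason this step is "rather delicate," is the precise numerical balance between the good curvature term and the $a$-dependent bad terms: only when $\kappa_1>16(a+a^{-1})$ does the good $\kappa_1^2$ term dominate the $(a+a^{-1})\sum F^{ii}$ and boundary-weight contributions, giving the claimed bound \eqref{gsz-G2} after dividing through by $u^2(\nu^{n+1}-a)\cdot\kappa_1$ and recognizing $M$. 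The factor $16$ — and, in the application, the threshold $\sigma^2>\tfrac18$ — is exactly what remains after tracking constants through the Simons/Gauss identities and the radial-graph simplification.
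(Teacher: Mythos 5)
Your overall setup is the same as the paper's (an interior maximum of $\kappa_{\max}/\big((\nu^{n+1}-a)u^2\big)$, passing to a radial graph so that $\nabla v=0$ at the critical point, and differentiating the equation twice), but there is a genuine gap at the decisive step: you propose to ``throw away'' the term $-F^{ij,kl}a_{ij,1}a_{kl,1}$ by concavity and then to cancel the remaining first-order terms ``via the critical equation.'' That cannot work for a general concave $f$. After the weight is chosen, the only genuinely negative term left on the good side of the inequality (see \eqref{gsz-G250}) is the gradient term $2(\kappa_1+y-a)\sum f_i(\kappa_i-y)y_i^2$, and for indices $i$ with $\kappa_i\leq y$ but $f_i\gg f_1$ this term is of size $-\kappa_1\sum f_i y_i^2$ with no a priori control on $f_i/f_1$; neither the critical-point relation nor the positive terms $a\kappa_1\sum f_i\kappa_i^2$, $\kappa_1\sum f_i$ can absorb it. The paper's proof does \emph{not} discard the concavity term: it is bounded below quantitatively by the Andrews--Gerhardt inequality \eqref{gsz-G270}, and only after substituting the critical-point identity \eqref{gsz-G230} into $a_{i1,1}$ does one get the lower bound \eqref{gsz-G300}, which---together with the index decomposition $I,J,K,L$ with $\theta=\tfrac16$ and the elementary estimates \eqref{gsz-G260}, \eqref{gsz-G265}---exactly dominates the bad gradient term when $\kappa_1\geq 16(a+\tfrac1a)$. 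This mechanism, not a Simons-identity balance, is the source of both the threshold $16(a+\tfrac1a)$ and ultimately the restriction $\sigma^2>\tfrac18$; asserting the ``numerical balance'' without it leaves the proof incomplete.

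A secondary inaccuracy: the role you assign to the hypothesis $\nu^{n+1}\geq 2a$ and to the combination $-2Lu/u-L\nu^{n+1}/(\nu^{n+1}-a)$ is not the actual structure. With $\phi=(\eta-a)u^2$ one has $y\phi_\eta-\phi=au^2$, so in Proposition \ref{gsz-prop5.1} the $\sum f_i\kappa_i^2$ term appears with the \emph{favorable} coefficient $+a\kappa_1$ (it is used to absorb the contribution of the very negative curvatures, the set $I$), rather than as a dangerous term that must be absorbed using $\nu^{n+1}\geq 2a$; the hypothesis $\nu^{n+1}\geq 2a$ merely keeps the weight $\eta-a$ comparable to $\eta$. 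Likewise, your statement that the first-order terms are ``cancelled against the good $|\nabla\kappa_1|^2$ term'' has the sign backwards: the term $-F^{ij}\nabla_i\kappa_1\nabla_j\kappa_1/\kappa_1^2$ works against you, and in the paper the derivatives $a_{11,i}$ are instead converted by \eqref{gsz-G230} into weight gradients and fed into the concavity term. Finally, the inhomogeneous terms must be tracked as in Lemma \ref{gsz-lem5.1} (differentiating $\Psi(x,u)$ twice along the radial graph) to obtain the precise right-hand side of \eqref{gsz-G2}; your ``l.o.t.'' bookkeeping does not by itself yield the stated dependence on $|\Psi_x|,|\Psi_u|,|\Psi_{xx}|,|\Psi_{ux}|,|\Psi_{uu}|$ divided by $u^2$.
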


If we assume, for example,  that
\begin{equation}\label{gsz-G3}
\Psi \geq \sigma_0,\, |\Psi_x|+|\Psi_u|  \leq \frac{L_1}{\epsilon},\,
 |\Psi_{xx}|+|\Psi_{ux}|+|\Psi_{uu}|\leq \frac{L_2}{\epsilon^2}\,\,\mbox{in $\Omega$}
\end{equation}
with  $L_1,\,L_2$ independent of $\epsilon$,
we obtain using Theorem \ref{gsz-th-B10}

\begin{theorem}
\label{gsz-th-G10}
Let %$\Omega$ be a bounded domain in $\bfR^n$, and
$u \in C^4 (\bar{\Omega})$ be a solution
of problem (\ref{gsz-I70'}) with $A^{\vv}[u] \in \mathcal{S}_K$.
%such that $u^2 + |x|^2$ is convex.
Suppose that $f$ satisfies (\ref{gs5-I20})-(\ref{gsz-I40}) and
$\psi$ satisfies (\ref{gsz-B20}), (\ref{gsz-G3}). % and (\ref{gsz-B30}).
Then if $\epsilon$ is sufficiently small,
\begin{equation}
\label{gsz-G10}
u|D^2 u| \leq C(1+\max_{\partial \Omega} u|D^2 u|)\frac{u^2}{\epsilon^2}
\;\;\; \mbox{in $\Omega$}
\end{equation}
where $C$  is independent of $\epsilon$.
\end{theorem}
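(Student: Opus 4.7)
My plan is to run the interior maximum principle of Theorem~\ref{gsz-th-G9} globally on $\ol\Omega$ for the test quantity
\[
 M(x) := \frac{\kappa_{\max}(x)}{u^2(x)(\nu^{n+1}(x)-a)},
\]
split into interior and boundary maxima, and at the end convert the resulting bound on $\kappa_{\max}$ into one on $u|D^2u|$ by inverting \eqref{gsz-F20}. To fix $a$, note that $\Psi=u\psi=f(\kappa[u])$ lies in $[\sigma_0, 1-\epsilon_1]$ uniformly by \eqref{gsz-B20} and \eqref{gsz-G3}. Lemma~\ref{hb-lemma-L50} then gives $\nu^{n+1}=1/w\geq \sigma_0/2$ throughout $\ol\Omega$ for $\epsilon$ small, so I set $a=\sigma_0/4$; then $\nu^{n+1}\geq 2a$ and $\nu^{n+1}-a\geq a>0$, and the hypotheses of Theorem~\ref{gsz-th-G9} are verified.

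Now let $x_0\in\ol\Omega$ realize $\max_{\ol\Omega} M$. If $x_0$ is interior, Theorem~\ref{gsz-th-G9} gives either $\kappa_{\max}(x_0)\leq 16(a+1/a)$ (in which case $M(x_0)$ is bounded by an absolute constant), or else \eqref{gsz-G2} combined with the derivative bounds \eqref{gsz-G3} yields
\[
 M^2(x_0) \leq \frac{C}{u^2(x_0)}\cdot\frac{L_1^2+L_2}{\epsilon^2}.
\]
Lemma~\ref{hb-lemma-L10} (with $\sigma_1=\sigma_0$, $\sigma_2=1-\epsilon_1$) supplies $u(x_0)\geq c_0\epsilon$, so $M(x_0)\leq C/\epsilon^2$. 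If instead $x_0\in\partial\Omega$, then $u(x_0)=\epsilon$, and \eqref{gsz-F20} viewed as a matrix identity (with $\Gamma=\{\gamma^{ij}\}$ and $w$ uniformly controlled) gives $\kappa_{\max}(x_0)\leq C(1+u(x_0)|D^2u(x_0)|)$, hence
\[
 M(x_0) \leq \frac{\kappa_{\max}(x_0)}{a\epsilon^2}
 \leq \frac{C(1+\max_{\partial\Omega}u|D^2u|)}{\epsilon^2}.
\]
Either way, for every $x\in\Omega$,
\[
 \kappa_{\max}(x) \leq M(x_0)\,u^2(x)(\nu^{n+1}(x)-a)
 \leq \frac{C(1+\max_{\partial\Omega}u|D^2u|)\,u^2(x)}{\epsilon^2}.
\]

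To conclude, rewrite \eqref{gsz-F20} as $u\,\Gamma D^2u\,\Gamma = wA^{\vv}[u]-I$. Since $\Gamma$ has eigenvalues in $[1/w,1]$ and $w$ is uniformly bounded by the gradient estimate of Section~\ref{gsz-L}, this gives $u|D^2u|(x)\leq C(\kappa_{\max}(x)+1)$. Inserting the bound above and absorbing the additive $+1$ via $1\leq u^2(x)/(c_0\epsilon)^2$ produces \eqref{gsz-G10}. The real analytical work is already packaged in Theorem~\ref{gsz-th-G9}; the only subtlety I expect here is to pair the singular factor $1/\epsilon$ from \eqref{gsz-G3} with the lower bound $u(x_0)\geq c_0\epsilon$ so that the final right side carries exactly the weight $u^2/\epsilon^2$ rather than a worse $1/\epsilon^3$, and to verify at the outset that $\nu^{n+1}-a$ stays uniformly positive so $M$ is a well-defined smooth function.
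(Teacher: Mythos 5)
Your proposal is correct and is essentially the paper's intended deduction: Theorem~\ref{gsz-th-G9} applied at an interior maximum of $M$ (with $a$ fixed via Lemma~\ref{hb-lemma-L50}, and the $1/\epsilon$ factors from \eqref{gsz-G3} paired with $u\gtrsim\epsilon$ from Lemma~\ref{hb-lemma-L10}), the boundary maximum simply absorbed into the $\max_{\partial\Omega}u|D^2u|$ term, and the passage between $\kappa_{\max}$ and $u|D^2u|$ via \eqref{gsz-F20} together with the gradient bound. One cosmetic slip: in the alternative $\kappa_{\max}(x_0)\leq 16(a+\frac1a)$ the quantity $M(x_0)$ is bounded by $C/\epsilon^2$ (because of the factor $u^{-2}(x_0)$), not by an absolute constant, but that is all your argument actually uses.
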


We begin the proof of Theorem \ref{gsz-th-G9} which is long and computational.

Let $\Sigma$ be the graph of $u$.  For $x \in \Omega$ let $\kappa_{\max} (x)$ be the
largest principal curvature of $\Sigma$ at the point $X = (x, u (x)) \in
\Sigma$. We consider
\begin{equation}
\label{gsz-G20}
 M_0 = \max_{x \in \ol{\Omega}} \frac{\kappa_{\max} (x)}{\phi (\eta, u)}
\end{equation}
where $\eta = \ve \cdot \nu$, $\nu$ is the upward (Euclidean) unit normal to
$\Sigma$, and $\phi$ a smooth positive function to be chosen later.
Suppose that $M_0$ is attained at an interior point $x_0 \in \Omega$
and let  $X_0 = (x_0, u (x_0))$.

After a horizontal translation of the origin in $\bfR^{n+1}$,
 we may write $\Sigma$ locally near $X_0$ as a radial graph
\begin{equation}
\label{gsz-F200'}
X = e^v \vz, \;\;\; {\vz} \in \bfS^n_+ \subset \bfR^{n+1}
\end{equation}
with $X_0 = e^{v (\vz_0)} \vz_0$, $\vz_0 \in \bfS^n_+$, such that
$\nu (X_0) = \vz_0$.

 In the rest of this section we shall follow the
notation in subsection~\ref{gsz-FR} and rewrite the equation in
(\ref{gsz-I70'}) in the form
\begin{equation}
\label{gsz-F100'}
F (A^{\vs} [v]) = \Psi \equiv u \psi
\end{equation}
where $A^{\vs} [v]$ is given in (\ref{gsz-F20'});
henceforth we write $A [v] = A^{\vs} [v]$ and $a_{ij} = a^{\vs}_{ij} [v]$.

 We choose an orthnormal local frame $\tau_1, \ldots, \tau_n$ around
$\vz_0$ on $\bfS^n_+$ such that $v_{ij} (\vz_0)$ is diagonal. Since
$\nu (X_0) = \vz_0$,  $\nabla v (\vz_0) = 0$ and, by
(\ref{gsz-F20'}), at $\vz_0$,
\begin{equation}
\label{gsz-G55}
 a_{ij} = y v_{ij} = \kappa_i
\delta_{ij}
\end{equation}
 where $\kappa_1, \ldots, \kappa_n$ are the principal
curvatures of $\Sigma$ at $X_0$. We may assume
\begin{equation}
\label{gsz-G60}
 \kappa_1 = \kappa_{\max} (X_0).
\end{equation}

The function $\frac{a_{11}}{\phi}$, which is defined locally near
$\vz_0$, then achieves its maximum at $\vz_0$ where, therefore
\begin{equation}
\label{gsz-G70}
 \Big(\frac{a_{11}}{\phi}\Big)_i = 0, \;\; 1 \leq i \leq n
\end{equation}
 and
\begin{equation}
\label{gsz-G80}
F^{ii} \Big(\frac{a_{11}}{\phi}\Big)_{ii} \\
= \frac{1}{\phi} F^{ii} a_{11, ii}
       - \frac{\kappa_1}{\phi^2}  F^{ii} \phi_{ii} \leq 0.
\end{equation}

\begin{proposition} At  $\vz_0$,
\label{gsz-prop5.1}
\begin{equation}\label{gsz-G81}
 \begin{aligned}
  y^2 \phi F^{ii} a_{11, ii} & \, - y^2 \kappa_1  F^{ii} \phi_{ii} \\
      = & \, y^2 \phi F^{ii} a_{ii, 11} + y (\phi_{\eta}\kappa_1 - \phi )F^{ii} y_j a_{ii,j}
             - 2 y\phi F^{ii}  y_1 a_{ii,1} \\
        & \, + (y \phi_{\eta} - \phi) \kappa_1 \sum f_i \kappa_i^2
   - \phi_{\eta \eta} \kappa_1 \sum f_i (y - \kappa_i)^2 y_i^2 \\
  & \, + (\phi (\kappa_1^2 + 2 y_1^2 + 1) - y^2 e^v \phi_u \kappa_1
       - (1 + y^2) \phi_{\eta} \kappa_1) \sum f_i \kappa_i \\
  & \, + (y \phi_{\eta} + y e^v \phi_u
           - (1 + 2 y_1^2) \phi) \kappa_1 \sum f_i \\
  & \, + (2 y \phi_{\eta} + 2 y e^v \phi_u - y^2 e^{2v} \phi_{uu}
        - 2 y^2 e^v \phi_{u \eta} - 2 \phi) \kappa_1 \sum f_i y_i^2  \\
  & \, + 2 ( \phi - \phi_{\eta} \kappa_1 + y e^v \phi_{u \eta} \kappa_1)
         \sum f_i \kappa_i y_i^2.
          \end{aligned}
\end{equation}
\end{proposition}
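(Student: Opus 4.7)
The plan is to verify the identity by direct computation at $\vz_0$, using the formula \eqref{gsz-F20'} for $a_{ij}$ in the radial-graph chart, the critical-point relation $\phi a_{11,i} = \kappa_1\phi_i$ that follows from \eqref{gsz-G70} together with $a_{11}(\vz_0) = \kappa_1$, and the Ricci identity on the unit sphere, whose Riemann tensor is $R_{abcd} = \delta_{ac}\delta_{bd} - \delta_{ad}\delta_{bc}$. The spherical identities $y_i = \ve\cdot\tau_i$ and $y_{ij} = -y\delta_{ij}$, valid everywhere on $\bfS^n$, are used throughout.

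The first observation is that the left-hand side of \eqref{gsz-G81} coincides with $y^2\phi^2 F^{ii}(a_{11}/\phi)_{ii}$. Indeed, the Leibniz expansion
\[ \phi^2(a_{11}/\phi)_{ii} = \phi a_{11,ii} - a_{11}\phi_{ii} - 2\phi_i a_{11,i} + 2 a_{11}\phi_i^2/\phi \]
has its last two terms cancelling after substituting $\phi a_{11,i} = \kappa_1\phi_i$ and $a_{11}(\vz_0) = \kappa_1$. So the task reduces to computing $F^{ii}a_{11,ii}$ and $F^{ii}\phi_{ii}$ at $\vz_0$ and forming the combination $y^2\phi F^{ii}a_{11,ii} - y^2\kappa_1 F^{ii}\phi_{ii}$.

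At $\vz_0$ the normalizations $\nabla v = 0$, $w = 1$, $\gamma^{ij} = \delta_{ij}$ and $y v_{ij} = \kappa_i\delta_{ij}$ yield the auxiliary identities $w_{,k} = 0$, $w_{,kk} = v_{ki}v_{ik}$, $\gamma^{ij}_{,k} = 0$, $\gamma^{ij}_{,kl} = -\tfrac{1}{2}(v_{ik}v_{jl} + v_{il}v_{jk})$, and $(\ve\cdot\nabla v)_{,k} = y_i v_{ik}$. Differentiating \eqref{gsz-F20'} twice along $\tau_i$ gives $F^{ii}a_{11,ii}$ in terms of $F^{ii}v_{11,ii}$ plus lower-order contributions involving $y, y_i$ and the $\kappa_j$, and a parallel calculation gives $F^{ii}a_{ii,11}$. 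The difference $F^{ii}(a_{11,ii} - a_{ii,11})$ is controlled by the spherical Ricci identity, which converts $v_{11,ii} - v_{ii,11}$ into a linear combination of $v_{ab} = \kappa_a\delta_{ab}/y$. This accounts for the purely curvature-valued sums $\sum f_i$, $\sum f_i\kappa_i$, $\sum f_i\kappa_i^2$, $\sum f_i y_i^2$ and $\sum f_i\kappa_i y_i^2$; the terms $F^{ii}y_j a_{ii,j}$ and $F^{ii}y_1 a_{ii,1}$ appear after applying the critical-point condition a second time to rewrite the residual tangential derivatives of $v$ left over from this commutation.

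For $F^{ii}\phi_{ii}$ I apply the chain rule
\[ \phi_{,ii} = \phi_{\eta\eta}\eta_i^2 + 2\phi_{\eta u}\eta_i u_i + \phi_{uu}u_i^2 + \phi_\eta\eta_{,ii} + \phi_u u_{,ii} \]
and compute the covariant derivatives of $u = y e^v$ and $\eta = (y - y^k v_k)/w$ at $\vz_0$: $u_{,i} = y_i e^v$, $u_{,ii} = e^v(\kappa_i - y)$, $\eta_{,i} = y_i(y-\kappa_i)/y$, together with the analogous but longer expression for $\eta_{,ii}$ in terms of $y, y_i$ and the $\kappa_j$. Substituting these into $y^2\phi F^{ii}a_{11,ii} - y^2\kappa_1 F^{ii}\phi_{ii}$ and using $F^{ii}$ diagonal with entries $f_i$ produces the $\phi_{\eta\eta}, \phi_{\eta u}, \phi_{uu}, \phi_\eta, \phi_u$ groupings on the right-hand side of \eqref{gsz-G81}. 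The main obstacle is bookkeeping rather than any conceptual subtlety: the numerous terms produced by the double differentiation of $a_{ij}$, of $\phi$, and of the auxiliary factors $w, \gamma^{ij}, y$ must all be assembled carefully, but no additional tools beyond those developed in Section \ref{gsz-F} are required.
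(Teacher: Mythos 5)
Your proposal follows essentially the same route as the paper's own proof: direct double differentiation of \eqref{gsz-F20'} at $\vz_0$ (where $\nabla v=0$, $w=1$, $\gamma^{ij}=\delta_{ij}$ and first derivatives of $w,\gamma$ vanish), commutation of second derivatives via the sphere's curvature identity $v_{kkii}=v_{iikk}+2(v_{kk}-v_{ii})$, the chain-rule expansion of $F^{ii}\phi_{ii}$ using $u_i=e^v y_i$, $u_{ii}=e^v(\kappa_i-y)$, $\eta_i=y_i(1-\kappa_i/y)$, and the critical-point relation $\phi\,a_{11,i}=\kappa_1\phi_i$ to eliminate the derivatives of $a_{11}$, exactly as in the paper. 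The only caveat is that $\eta_{ii}$ is not expressible in $y$, $y_i$, $\kappa_j$ alone: it retains the third-order contribution $-\tfrac{1}{y}\sum_j y_j a_{ii,j}$ coming from $(\ve\cdot\nabla v)_{ii}$, and this is precisely what produces the $y\,\phi_{\eta}\kappa_1\,F^{ii}y_j a_{ii,j}$ term on the right-hand side of \eqref{gsz-G81}.
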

\begin{proof}
In what follows all calculations are evaluated at $\vz_0$.
 Since $\nabla v = 0$, we have 
\begin{equation}
\label{gsz-G90}
 w = 1, \;\; w_i = 0, \;\; w_{ij} = v_{ki} v_{kj}
\end{equation}
(recall $w = \sqrt{1 + |\nabla v|^2}$). Straightforward calculations
show that
\begin{equation}
\label{gsz-G120}
 a_{ij, k} = y v_{ijk} + y_k v_{ij} - (\ve \cdot
\nabla v)_k \delta_{ij}
\end{equation}
\begin{equation}
\label{gsz-G130}
 a_{kk, ii} =  y v_{kkii} + 2 y_i v_{kki} + y_{ii} v_{kk}
                - (\ve \cdot \nabla v)_{ii} - y v_{kk} v_{ii}^2
                - 2 y v_{kk}^3 \delta_{ki}.
\end{equation}
Therefore,
\begin{equation}
\label{gsz-G135}
\begin{aligned}
 a_{11, ii} - a_{ii, 11} = \, &
    y (v_{11ii} - v_{ii11} - v_{11} v_{ii}^2 + v_{ii} v_{11}^2) \\
    & + y_{ii} v_{11} - y_{11} v_{ii} + 2 (y_i v_{11i} - y_1 v_{ii1}) \\
    & + (\ve \cdot \nabla v)_{11} - (\ve \cdot \nabla v)_{ii}.
   \end{aligned}
\end{equation}
We recall the following formulas
\begin{equation}
\label{gsz-G100}
 v_{ijk} = v_{ikj} = v_{kij}
\end{equation}
\begin{equation}
\label{gsz-G110}
 v_{kkii} = v_{iikk} + 2 (v_{kk} - v_{ii})
\end{equation}
(where we have used the fact that $\nabla v = 0$) and, from
\cite{GS00},
\begin{equation}
\label{gsz-G155}
 \sum y_j^2 = 1 - y^2
\end{equation}
\begin{equation}
\label{gsz-G140}
 y_{ij} = - y \delta_{ij}
\end{equation}
\begin{equation}
\label{gsz-G150}
  (\ve \cdot \nabla v)_i = y_i v_{ii}
\end{equation}
\begin{equation}
\label{gsz-G160}
 \begin{aligned}
 (\ve \cdot  \nabla v)_{ij}
 = \, & \ve \cdot \tau_k v_{kij} - 2 y v_{ij} - \ve \cdot \tau_j v_i \\
 = \, & y_k v_{ijk} - 2 y v_{ij} - y_j v_i.
  \end{aligned}
\end{equation}
By (\ref{gsz-G120}) and (\ref{gsz-G55}),
\begin{equation}
\label{gsz-G120'}
 v_{iij} = \frac{a_{ii,j}}{y} -  \frac{y_j \kappa_i}{y^2}
           + \frac{y_j \kappa_j}{y^2}
\end{equation}
\begin{equation}
\label{gsz-G170}
 (\ve \cdot  \nabla v)_{ii}
 = \frac{y_j a_{ii, j}}{y} - \Big(1 + \frac{1}{y^2}\Big) \kappa_i
    + \frac{1}{y^2} \sum_j \kappa_j y_j^2.
\end{equation}
Plug these formulas into (\ref{gsz-G135}) and note that
 $F^{ij} = f_i \delta_{ij}$. We obtain
\begin{equation}
\label{gsz-G180}
 \begin{aligned}
y^2 F^{ii} a_{11, ii}
 = \, & y^2  F^{ii} a_{ii, 11}
        - y F^{ii} (y_j a_{ii,j} + 2 y_1 a_{ii,1}) \\
      & + y F^{ii} (y_j a_{11,j} + 2 y_i a_{11,i})
        - \kappa_1 \sum f_i \kappa_i^2 \\
      & + (\kappa_1^2 + 2 y_1^2 + 1) \sum f_i \kappa_i
        - (1 + 2 y_1^2) \kappa_1 \sum f_i \\
      & + 2 \sum f_i \kappa_i y_i^2 - 2 \kappa_1 \sum f_i y_i^2.
  \end{aligned}
\end{equation}

Next, recall that $u = y e^v$ and
%\begin{equation}
$\eta := \ve \cdot \nu = \frac{y - \ve \cdot \nabla v}{w}$.
%\end{equation}
 At $\vz_0$ we have $\eta = y$,
%\[ \eta = y \]
\begin{equation}
\label{gsz-G190}
 \eta_i = y_i - (\ve \cdot \nabla v)_i = y_i (1 - v_{ii})
\end{equation}
\begin{equation}
\label{gsz-G200}
 \begin{aligned}
 \eta_{ii}
  = & \; y_{ii} - (\ve \cdot \nabla v)_{ii} - y v_{ii}^2 \\
  = & \; - y - \frac{\kappa_i^2}{y} + \Big(1 + \frac{1}{y^2}\Big)
   \kappa_i \\
    & - \frac{1}{y^2} \sum_j (y y_j a_{ii,j} + \kappa_j y_j^2)
  \end{aligned}
\end{equation}
and
\begin{equation}
\label{gsz-G210}
 u_i =  e^v y_i, \;\; u_{ii} =  e^v (\kappa_i - y).
\end{equation}
We have
%\[  \begin{aligned}
%y \phi F^{ii} y_i a_{11,i}
%\, & = y \kappa_1 \sum f_i y_i \phi_i \\
%   & = y \kappa_1 \sum f_i y_i^2 - \kappa_1 \sum f_i \kappa_i y_i^2.
%  \end{aligned}
%Let $\phi = \phi (\eta, u)$:
\begin{equation}
\label{gsz-G220}
 \begin{aligned}
 y^2 F^{ii} \phi_{ii}
 = & \,  y^2 F^{ii} (\phi_{\eta} \eta_{ii} + \phi_{\eta \eta} \eta_i^2
  + 2 \phi_{u \eta} u_i \eta_i + \phi_{uu} u_i^2 + \phi_u u_{ii}) \\
 %= & \,  y^2 \Big(\phi_{\eta}  F^{ii} \eta_{ii}
 %  + \phi_{\eta \eta}  F^{ii} y_i^2 (1 - v_{ii})^2
 %  + 2 e^v \phi_{u \eta} F^{ii} y_i^2 (1 - v_{ii}) \\
 %  & \, +  e^{2v} \phi_{uu} F^{ii} y_i^2
 %       + e^v \phi_u F^{ii} (y_{ii} + y v_{ii})\Big) \\
 = & \, -  y \phi_{\eta} \sum f_i \kappa_i^2
        + (y^2 e^v \phi_u + (1 + y^{2}) \phi_{\eta}) \sum f_i \kappa_i \\
   & \, - ( y^3 \phi_{\eta} + y^3 e^v \phi_u +  \kappa_j y_j^2 \phi_{\eta})
          \sum f_i
        + \phi_{\eta \eta} \sum f_i y_i^2 (y - \kappa_i)^2 \\
   & \, + (y^2 e^{2v} \phi_{uu} + 2 y^2 e^v \phi_{u \eta}) \sum f_i y_i^2
        - 2 y e^v \phi_{u \eta} \sum f_i \kappa_i y_i^2\\
   &\,-y\phi_{\eta}\sum yy_j F^{ii}a_{ii,j}.
%- \phi_{\eta} F^{ii} y_k a_{ii,k}.
\end{aligned}
\end{equation}
By (\ref{gsz-G70}),
% Suppose
%\[ \Big(\frac{a_{11}}{\phi}\Big)_i = 0, \;\; 1 \leq i \leq n. \]
% Then
\begin{equation}
\label{gsz-G230}
 a_{11,i} \phi
   = \kappa_1 \phi_i
   = \kappa_1 (\phi_{\eta} \eta_i + e^v \phi_u y_i)
   = \kappa_1 \phi_{\eta} \Big(1 - \frac{\kappa_i}{y}\Big) y_i
    + e^v \phi_u \kappa_1 y_i.
\end{equation}
Using (\ref{gsz-G230}) we have
\begin{equation}\label{gsz-G231}
\begin{aligned}
y\phi F^{ii}(y_j & \, a_{11,j}+2y_i a_{11,i}) = \\
& \,2(y\phi_{\eta}+ye^v \phi_u)\kappa_1 \sum f_i y_i^2
  -2 \phi_{\eta}\kappa_1 \sum f_i \kappa_i y_i^2 \\
& + \Big(y (1-y^2)\phi_{\eta}+y(1-y^2)e^v \phi_u
-\phi_{\eta}\sum \kappa_j y_j^2\Big) \kappa_1 \sum f_i.
\end{aligned}
\end{equation}
Combining (\ref{gsz-G80}),  (\ref{gsz-G180}),
(\ref{gsz-G220}) and (\ref{gsz-G231}), we obtain (\ref{gsz-G81}).
\end{proof}

\begin{lemma}
\label{gsz-lem5.1}
\begin{equation}\label{gsz-G241}
\begin{aligned}
y^2 F^{ii} a_{ii,11} & \, -2yF^{ii}y_1 a_{ii,1}
+ y \Big(\frac{\phi_{\eta}}{\phi}\kappa_1 - 1\Big) F^{ii}y_j a_{ii,j}
\geq -y^2F^{ij,kl}a_{ij,1}a_{kl,1} \\
& -C\{u\kappa_1 (|\Psi_x|+|\Psi_u|) + u^2 (|\Psi_{xx}|+|\Psi_{ux}|+|\Psi_{uu}|)\}
\end{aligned}
\end{equation}
where C depends on an upper bound for $|\frac{\phi_{\eta}}{\phi}|$.
\end{lemma}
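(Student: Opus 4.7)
The plan is to convert the left-hand side of (\ref{gsz-G241}) into $-y^2 F^{ij,kl}a_{ij,1}a_{kl,1}$ plus an error involving only first and second derivatives of $\Psi$ along $\bfS^n$, and then bound that error by the chain rule for the radial parameterization $x = e^v\vz,\; u = ye^v$. Step one is to differentiate the equation $F(A^{\vs}[v]) = \Psi$: one differentiation in $\tau_k$ gives $F^{ij}a_{ij,k} = \Psi_k$, and two in $\tau_1$ give $F^{ij}a_{ij,11} + F^{ij,kl}a_{ij,1}a_{kl,1} = \Psi_{11}$. At $\vz_0$, where $\{a_{ij}\}$ is diagonal and $F^{ij} = f_i\delta_{ij}$, these simplify to $F^{ii}a_{ii,k} = \Psi_k$ and $F^{ii}a_{ii,11} = \Psi_{11} - F^{ij,kl}a_{ij,1}a_{kl,1}$. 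Substituting into the LHS of the lemma yields
\[
\text{LHS} \;=\; -y^2 F^{ij,kl}a_{ij,1}a_{kl,1} \;+\; \mathcal{E}, \qquad
\mathcal{E} \;:=\; y^2\Psi_{11} - 2y\,y_1\Psi_1 + y\Bigl(\tfrac{\phi_\eta}{\phi}\kappa_1 - 1\Bigr) y_j\Psi_j ,
\]
so it suffices to show that $|\mathcal{E}|$ is controlled by the stated error.

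For step two, view $\Psi(x,u)$ as a function on $\bfS^n_+$ through $\vz \mapsto (e^v\vz,\,ye^v)$, and work in a frame that is geodesic at $\vz_0$. Using $\nabla v(\vz_0)=0$, $y_{ij}=-y\delta_{ij}$, $yv_{ij}=\kappa_i\delta_{ij}$, $u_i = e^v y_i$, $u_{11} = e^v(\kappa_1-y)$ and $x_{ij} = e^v(v_{ij}-\delta_{ij})\vz_0$, the chain rule gives at $\vz_0$
\[
\Psi_k \;=\; e^v\bigl(\Psi_x\cdot\tau_k + \Psi_u y_k\bigr), \qquad
\Psi_{11} \;=\; e^{2v} Q + e^v(v_{11}-1)\,\Psi_x\cdot\vz_0 + e^v(\kappa_1-y)\Psi_u,
\]
where $|Q|\leq C(|\Psi_{xx}|+|\Psi_{ux}|+|\Psi_{uu}|)$. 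Because $e^v = u/y$ and $y=\eta\geq 2a>0$ by hypothesis, the factors $e^v(v_{11}-1)=u(\kappa_1-y)/y^2$ and $e^v(\kappa_1-y)=u(\kappa_1-y)/y$ are both bounded in absolute value by $C(\kappa_1+1)u$, while $e^{2v}|Q|\leq Cu^2(|\Psi_{xx}|+|\Psi_{ux}|+|\Psi_{uu}|)$. This yields
\[
|\Psi_k| \leq Cu(|\Psi_x|+|\Psi_u|), \qquad
|\Psi_{11}| \leq Cu^2(|\Psi_{xx}|+|\Psi_{ux}|+|\Psi_{uu}|) + C(\kappa_1+1)u(|\Psi_x|+|\Psi_u|).
\]

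For step three, using $|y|,|y_i|\leq 1$ and the assumed upper bound on $|\phi_\eta/\phi|$, inserting the estimates of step two into the definition of $\mathcal E$ gives
\[
|\mathcal{E}| \;\leq\; C\bigl\{u\kappa_1(|\Psi_x|+|\Psi_u|) + u^2(|\Psi_{xx}|+|\Psi_{ux}|+|\Psi_{uu}|)\bigr\},
\]
where the stray ``$+1$'' inside $\kappa_1+1$ is absorbed into $\kappa_1$ (the regime $\kappa_1\geq 1$ being the only one of interest, since the other alternative of Theorem~\ref{gsz-th-G9} already handles bounded $\kappa_{\max}$). Combined with step one this is exactly the inequality of the lemma. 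The only genuinely delicate point is the chain-rule accounting in step two: the nominally singular factor $v_{11}=\kappa_1/y$ appearing in the expansion of $\Psi_{11}$ is always paired with exactly one power of $e^v=u/y$, producing a product $u\kappa_1/y^2$ whose excess $y^{-2}$ is absorbed into the constant $C$ via the uniform lower bound $y\geq 2a$, and the same applies to $\Psi_u u_{11}$; once these powers are tracked the rest is elementary bookkeeping.
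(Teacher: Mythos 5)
Your proposal is correct and follows essentially the same route as the paper: differentiate $F(A^{\vs}[v])=\Psi$ once and twice along the sphere, expand $\Psi_j$ and $\Psi_{11}$ by the chain rule for $x=e^v(\vz-y\ve)$, $u=ye^v$ at $\vz_0$, and bound the resulting first- and second-order terms (your formulas reproduce (\ref{gsz-G242})--(\ref{gsz-G245}), with the geodesic frame and the error term $\mathcal{E}$ being only organizational differences). The absorption of the stray lower-order terms into $u\kappa_1(|\Psi_x|+|\Psi_u|)$, which you justify via the dichotomy in Theorem~\ref{gsz-th-G9} (or note $\kappa_1\geq\sigma_0$ by (\ref{gsz-I200})), is the same implicit step the paper takes when it says the formula "follows immediately."
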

\begin{proof}
Since $X=e^v \vz,~x=e^v(\vz-y\ve)~\mbox{and}~u=e^v y$. Hence,
\begin{equation}
\label{gsz-G242}
\begin{aligned}
&\nabla_{\tau_j} x=e^v(\tau_j-y_j\ve) + v_j x \\
&\nabla_{\tau_j}\Psi(x,u)=e^v(\Psi_x \cdot (\tau_j-y_j \ve)+\Psi_u (yv_j+y_j))+(x \cdot \Psi_x)v_j \\
&\nabla_{\tau_1 \tau_1}\Psi(x,u)=e^{2v}(D_x^2 \Psi(\tau_1-y_1 \ve)(\tau_1-y_1 \ve)-
2y_1 \Psi_{ux}\cdot (\tau_1-y_1 \ve)+\Psi_{uu}y_1^2)\\
&+ e^v(\Psi_x \cdot (\nabla_{\tau_1}\tau_1-y_{11}\ve)+\Psi_u(\kappa_1+y_{11}))+(x \cdot \Psi_x)v_{11}.
\end{aligned}
\end{equation}
Using (\ref{gsz-G242}) and differentiating equation~(\ref{gsz-F100'}) twice gives
\begin{equation}
\label{gsz-G245}
\begin{aligned}
&yF^{ii}y_j a_{ii,j}=u(\Psi_x\cdot (y_j \tau_j-(1-y^2)\ve)+\Psi_u (1-y^2))\\
&yF^{ii}y_1 a_{ii,1}=u(\Psi_x \cdot (y_1 \tau_1-y_1^2 \ve) +\Psi_u y_1^2)\\
 &y^2F^{ii} a_{ii,11} =u^2(D_x^2 \Psi(\tau_1-y_1 \ve)\cdot (\tau_1-y_1 \ve)
 +2y_1\Psi_{ux} \cdot (\tau_1 - y_1 \ve)+\Psi_{uu}y_1^2)\\
  &+yu(\Psi_x \cdot (\nabla_{\tau_1}\tau_1+ y \ve)+\Psi_u(\kappa_1-y))+(x \cdot \Psi_x)y\kappa_1
  - y^2 F^{ij,kl} a_{ij,1} a_{kl,1}.
\end{aligned}
\end{equation}
Formula (\ref{gsz-G241}) follows immediately from (\ref{gsz-G245}).
\end{proof}

We now make the choice $\phi (\eta, u) = (\eta - a) u^2$ where $0
< a \leq \eta/2$. We have
\[ (y - a) \phi_{\eta} = \phi, \;\; \phi_{\eta \eta} = 0,    \;\; u \phi_u = u^2 \phi_{uu} = 2\phi,
\;\; u(y-a)\phi_{u\eta}=2\phi.\]
By  Proposition \ref{gsz-prop5.1} and Lemma \ref{gsz-lem5.1}, for $\kappa_1 \geq 16(a+\frac1a)$
\begin{equation}
\label{gsz-G250}
 \begin{aligned}
%y^2 (y-a) \phi  F^{ii} \Big(\frac{a_{11}}{\phi}\Big)_{ii} \geq & \,
-&y^2 (y - a) F^{ij,kl} a_{ij,1}a_{kl,1}
     + a \kappa_1 \sum f_i \kappa_i^2
     +\frac{a}2 \sigma_0  \kappa_1^2\\
        +& (a+2y^2(y-a)) \kappa_1 \sum f_i
     +2 (\kappa_1 + y - a) \sum f_i (\kappa_i-y) y_i^2
        + 2 y (y - a) \sum f_i y_i^2\\
        & \leq C\{u\kappa_1 (|\Psi_x|+|\Psi_u|) + u^2(|\Psi_{xx}|+|\Psi_{ux}|
        +|\Psi_{uu}|)\}.
  \end{aligned}
\end{equation}

Let $0 < \theta < 1$  (to be chosen in a moment) and set
\[  \begin{aligned}
I & = \{i: \kappa_i \leq - \theta \kappa_1\}, \\
J & = \{i: - \theta \kappa_1 < \kappa_i \leq y, \; f_i < \theta^{-1} f_1\}, \\
K & = \{i: - \theta \kappa_1 < \kappa_i \leq y, \; f_i \geq \theta^{-1} f_1\}, \\
L & = \{i: \kappa_i > y\}.
%K & = \{i: 0 < \kappa_i \leq a, \; f_i >  \theta^{-1} f_1\},  \\
%L & = \{i: a < \kappa_i \leq y, \; f_i >  \theta^{-1} f_1\}
  \end{aligned} \]
Note that for $i\in L$, all the terms on the left hand side of (\ref{gsz-G250})are nonnegative.

We have (provided that $\theta \kappa_1 \geq 1$),
\begin{equation}
\label{gsz-G260}
\begin{aligned}
 \sum_{i \in I} f_i \kappa_i^2
 \geq & \, \frac{1}{2} \sum_{i \in I} f_i (\kappa_i^2 + \theta^2 \kappa_1^2) \\
% \geq & \, \frac{1}{4} \sum_{i \in I} f_i (|\kappa_i| + \theta \kappa_1)^2 \\
 \geq & \, \frac{\theta \kappa_1}{2}
           \sum_{i \in I} f_i (|\kappa_i| + \theta \kappa_1) \\
 \geq & \, \frac{\theta \kappa_1}{2}
           \sum_{i \in I} f_i (|\kappa_i| + y) y_i^2,
  \end{aligned}
\end{equation}
 and
\begin{equation}
\label{gsz-G265}
 \sum_{i \in J} f_i (\kappa_i - y) y_i^2 \geq - 2\kappa_1   f_1.
\end{equation}
According to Andrews~\cite{Andrews94}
and Gerhardt~\cite{Gerhardt96} (see also \cite{Urbas02}, Lemma 3.1 and
\cite{SUW04}),
\begin{equation}
\label{gsz-G270}
 %F^{ii} a_{ii,11}
 - F^{ij,kl} a_{ij,1} a_{kl,1}
 \geq  \sum_{i \neq j} \frac{f_i - f_j}{\kappa_j - \kappa_i} a_{ij,1}^2
 \geq 2 \sum_{i=2}^n \frac{f_i - f_1}{\kappa_1 - \kappa_i} a_{i1,1}^2.
\end{equation}
By (\ref{gsz-G120}) and (\ref{gsz-G230}),
\[  \begin{aligned}
\label{gsz-G280}
 y a_{i1,1}
 = y a_{11,i} + \kappa_i y_i - \kappa_1 y_i
 = \Big(\kappa_i + \kappa_1  + \frac{y - \kappa_i}{y - a} \kappa_1\Big)
 y_i.
  \end{aligned} \]
Therefore,
\[ y^2 a_{i1,1}^2 \geq %\left\{ \begin{aligned}
 \frac{2 (1 - \theta) \kappa_1^2}{y -a}  (y -  \kappa_i) y_i^2,
 \;\; \; \forall \; i \in K. \]
Note that
\begin{equation}
\label{gsz-G290}
 \frac{f_i - f_1}{\kappa_1 - \kappa_i}
\geq \frac{f_i - \theta f_i}{\kappa_1 + \theta \kappa_1}
=  \frac{(1 - \theta) f_i}{(1 + \theta) \kappa_1},
\;\;\forall \; i \in K.
\end{equation}
It follows that
\begin{equation}
\label{gsz-G300}
 -y^2 (y - a) F^{ij,kl} a_{ij,1} a_{kl,1}   \geq  \frac{4 (1 - \theta)^2 }{ 1 + \theta}\kappa_1
                   \sum_{i \in K} f_i (\kappa_i - y) y_i^2.
\end{equation}
We now fix $\theta$ such that
\[ \frac{4 (1 - \theta)^2}{ 1 + \theta} \geq 2 + \theta. \]
For example, we can choose $\theta=\frac{1}{6}$.
From (\ref{gsz-G260}), (\ref{gsz-G265}) and (\ref{gsz-G300})
we obtain
\begin{equation}
\label{gsz-G310}
y^2 (y - a)  F^{ij,kl} a_{ij,1} a_{kl,1} + a \kappa_1 \sum f_i \kappa_i^2
       + 2 (\kappa_1 + y - a) \sum f_i (\kappa_i -y) y_i^2 \geq 0
\end{equation}
provided that $\kappa_1\geq 16(a+\frac1a)$.  Consequently,
\begin{equation}
\label{gsz-G320}
     \frac{a\sigma_0}2 \kappa_1^2
      \leq C\{(|\Psi_x|+|\Psi_u|)u\kappa_1+u^2(|\Psi_{xx}|+|\Psi_{ux}|
        +|\Psi_{uu}|)\}
\end{equation}
Formula (\ref{gsz-G2}) follows easily from (\ref{gsz-G320}) completing the proof of Theorem \ref{gsz-th-G9}.

\bigskip

\section{Existence: Proof of Theorems~\ref{gsz-th30} and \ref{gsz-th20}}
\label{gsz-P}

In order to prove Theorem \ref{gsz-th30}  we will construct a monotone  sequence $\{u_k \}$
of admissible functions satisfying (\ref{gsz-I20}) in $\Omega$
starting from $u_0 \equiv \epsilon$. Having found $u_0\leq u_1 \leq \ldots \leq u_k,~ u=u_{k+1}$ is a solution of the Dirichlet problem
\begin{equation}
\label{gsz-G350}
\begin{aligned}
G(D^2 u, Du, u)&=\frac1{u} \Big(\sigma + \frac1{\epsilon}(u-u_k)\Big) \equiv \psi(x,u)~\mbox{in}~\Omega. \\
&u=\epsilon~\mbox{on}~\partial \Omega
\end{aligned}
\end{equation}

In order to solve (\ref{gsz-G350}) we use a continuity method for $u=u^t, ~0\leq t \leq1$:
\begin{equation}
\label{gsz-G355}
\begin{aligned}
G(D^2 u, Du, u)&=
\frac1{u} \Big(\sigma +\frac1{\epsilon}(u-(tu_k+(1-t)u_{k-1}))\Big) ~\mbox{in}~\Omega,~ ~~\,k\geq 1 \\
G(D^2 u, Du, u)&=\frac1{u} \Big(t(\sigma-1)+\frac1{\epsilon}u\Big)~\mbox{in}~\Omega,~ ~~k=0 \\
u&=\epsilon~\mbox{on}~\partial \Omega
\end{aligned}
\end{equation}
where $u\in \mathcal{A}_k=\{u\geq u_k ~\mbox{ and $u$ admissible}\}$ and $u^0=u_k$. Since u
is admissible we have from section 3 that $|u|_{C^1{\Omega}}\leq C$ for a uniform constant C.
Now according to (\ref{gsz-185}),
\begin{equation}
\label{gsz-380}
\begin{aligned}
&G_u-\psi_u \leq \frac1{u^2} \Big(\sigma-\frac1w-\frac1{\epsilon}(tu_k+(1-t)u_{k-1})\Big)
\leq -\frac{1-\sigma+\frac1C}{u^2}, \,\,~\mbox{ $k \geq 1$}\\
&G_u-\psi_u \leq \frac1{u^2} \Big(t(\sigma-1)-\frac1w\Big)\leq
-\frac1{Cu^2},\,\,~~\mbox{ $k=0$.}
\end{aligned}
\end{equation}
Hence for $\Omega \in C^{2+\alpha}$, the linearized operator for $u^t$ is invertible  and the set of t for which (\ref{gsz-G355}) is solvable  is open. In particular,  (\ref{gsz-G355}) is solvable for $0 \leq t \leq 2t_0$.
Using standard regularity theory for concave fully nonlinear operators, to show the closedness of this set, it suffice to show $|u|_{C^2(\Omega)}\leq C$ for a uniform constant C for $t_0\leq t \leq 1$.
Observe that  $\Psi(x,u)=u\psi(x,u)=\sigma +\frac1{\epsilon}(u-(tu_k+(1-t)u_{k-1}))$ satisfies the conditions (\ref{gsz-B20}), (\ref{gsz-G3}) of Theorem \ref{gsz-th-G10}. Hence we obtain an estimate
\[ \sup_{\Omega}|D^2 u| \leq \frac{C_k}{\epsilon^3} \]
where $C_k$ depends on $k$ but is independent of $t$. Therefore (\ref{gsz-G355}) is solvable for all
$0 \leq t \leq 1$ and so we have found  a monotone increasing sequence of solutions to (\ref{gsz-G350}).\\

It remains to show that the  sequence $\{u_k\}$ converges to a solution of (\ref{gsz-I70}). For this we need
second derivative estimates independent of $k$. Define
  \[  M_{k}(x) =  \frac{\kappa_{\max} (x)}{u_k^2(x)(\nu_k^{n+1}(x)-a)}~.\]
 If  $M_{k}(x) $ achieves its maximum on $\partial \Omega$, then according to Theorem \ref{gsz-th-B10},
 $M_{k}(x) \leq \frac{C}{\epsilon^2}$ where $C$ is independent of $k$ and $\epsilon$ (see Remark \ref{rem1}). Otherwise
applying Theorem~\ref{gsz-th-G9} with $\Psi(x,u)=\sigma+\frac1{\epsilon}(u-u_k)$ we obtain
\begin{equation}
 \label{gsz-G400}
M_{k+1}^2 \leq \frac{C}{\epsilon^4}+\frac{C}{\epsilon^2}M_k \leq
\frac{C}{\epsilon^4}+\frac12 M_k^2
\end{equation}
where $C$ is independent of $k$ and $\epsilon$. Iterating (\ref{gsz-G400}) gives
\[M_k^2 \leq \frac{2C}{\epsilon^4}+\frac12 M_1^2\leq \frac{C}{\epsilon^4}.\]
It follows that the sequence $u_k$ converges uniformly in $C^{2+\alpha}(\ol{\Omega})$
and the proof of Theorem \ref{gsz-th30} is complete. \\

To finish the proof of Theorem \ref{gsz-th20} we need to show that for $\sigma^2>\frac18$, we can obtain an estimate for $\sup_{\Omega}\kappa_{\max}$ which is independent of $\epsilon$ as $\epsilon$
tends to zero.

As in Section 5 we define
\[ M_0 = \max_{x \in \ol{\Omega}} \frac{\kappa_{\max} (x)}{\phi (\eta, u)}~.\]
We now choose $\phi = \eta -a$ ,  where $\inf \eta >a$. If $M_0$ is achieved on $\partial \Omega$, then
we obtain a uniform bound by Theorem \ref{gsz-th-G9}. Otherwise at an interior maximum,
Proposition \ref{gsz-prop5.1} and Lemma \ref{gsz-lem5.1} gives
\begin{equation}
\label{gsz-G500}
 \sigma (y-a) \kappa_1^2  + (a - 2 (1-y^2) (y - a))  \kappa_1 \sum f_i
  \leq 4\sigma\kappa_1
\end{equation}
where we have dropped some positive terms from the left hand side of (\ref{gsz-G500})
 and used $\sum f_i \kappa_i y_i^2 \leq \sigma$.
 From (\ref{gsz-G500}) we see that we must find the minimum of the function
 \begin{equation} 
 \label{gsz-G550}
 \gamma(y)=a - 2 (1-y^2) (y - a)=2y^3-2ay^2-2y+3a ~\,\, \mbox{on $[a, 1]$}.
\end{equation}
We have
\begin{equation}
 \label{gsz-G550'}
 \begin{aligned}
 \,&\gamma^{\prime}(y)=2(3y^2-2ay-1)\\
 \,&\gamma''(y)=4(3y-a)
 \end{aligned}
 \end{equation}
The unique critical point of $\gamma(y)$ in $(a, 1)$ is $y^*=\frac{a+\sqrt{a^2+3}}3$ and some computation
shows that
\[\gamma(y^*)=\frac73 a -\frac4{27}a^3-\frac4{27}(a^2+3)^{\frac32}.\]
It is also not difficult to see that $\gamma(y^*)<a=\gamma(a)=\gamma(1)$. 

%{\bf Claim:} 
We claim $\gamma(y^*)>0$ if $a^2> \frac18$.
This is equivalent to showing
\[ 4(a^2+3)^{\frac32}<a(63-4a^2)\]
which after squaring both sides is in turn equivalent to
\[a^4-\frac{131}{24} a^2+\frac23 =(a^2-\frac18)(a^2-\frac{16}3)<0~.\]
Thus our claim follows.

Now suppose $2\varepsilon_0=\sigma^2-\frac18>0$ and set $a^2=\frac18+\varepsilon_0$. Then
\[\sigma-a=\frac{\varepsilon_0}{\sigma+a}>\frac{\varepsilon_0}{2\sigma}~.\]
According to Lemma~\ref{hb-lemma-L50} (see formula (\ref{gsz-L160})), $\eta \geq \sigma-C \epsilon$
for a uniform constant $C$ if $\epsilon$ is sufficiently small.  Hence if $C \epsilon \leq \frac{\varepsilon_0}{4\sigma}$,
\[ \eta-a \geq (\sigma-a)-C\varepsilon \geq \frac{\varepsilon_0}{2\sigma}-C\varepsilon>
\frac{\varepsilon_0}{4\sigma}~.\]
Returning to formula (\ref{gsz-G500}) we find
\[\frac{\varepsilon_0}4 \kappa_1^2 \leq 4\sigma\kappa_1 \]
or
\[ \kappa_1 \leq \frac{16\sigma}{\varepsilon_0}=\frac{16\sigma}{\frac18-\sigma^2}~.\]
The proof of Theorem \ref{gsz-th20} is complete.

\bigskip

\end{document}